\documentclass[preprint,11pt]{article}
\topmargin=0mm \evensidemargin=0mm \oddsidemargin=0mm \headsep=0mm
\textwidth=15.5cm \textheight=21.8cm

\parindent=2em

\usepackage{color}
\usepackage{latexsym}
\usepackage{amsmath}
\usepackage{amsthm}
\usepackage{amssymb}
\usepackage{mathrsfs}
\usepackage{authblk} 

\usepackage{subfigure}
\usepackage[graphicx]{realboxes}
\usepackage{caption}

\allowdisplaybreaks[4]

\newtheorem{theorem}{Theorem}[section]
\newtheorem{corollary}[theorem]{Corollary}
\newtheorem{lemma}[theorem]{Lemma}

\newtheorem{remark}[theorem]{Remark}

\begin{document}

\title{\bf{\textmd{Law of the logarithm for the maximum interpoint distance constructed by high-dimensional random matrix}}}

\author{\small Haibin Zhang}
\author{\small Yong Zhang}
\author{\small Xue Ding\thanks{The corresponding address: dingxue83@jlu.edu.cn}}

\affil{School of Mathematics, Jilin University, Changchun 130012, China}

\date{}

\maketitle

\begin{abstract}
Suppose $\left \{ X_{i,k}; 1\le i \le p, 1\le k \le n \right \}  $ is an array of i.i.d.~real random variables. Let  $\left \{ p=p_{n}; n \ge1 \right \}  $ be positive integers. Consider the maximum interpoint distance $M_{n}=\max_{1\le i< j\le p}    \left \| \boldsymbol{X}_{i}- \boldsymbol{X}_{j} \right \|_{2} $ where $\boldsymbol{X}_{i}$ and $\boldsymbol{X}_{j}$ denote the $i$-th and $j$-th rows of the $p \times n$ matrix $\mathcal{M} _{p,n}=\left( X_{i,k} \right)_{p \times n}$, respectively. This paper shows the laws of the logarithm for $M_{n}$ under two high-dimensional settings: the polynomial rate and the exponential rate. The proofs rely on the moderation deviation principle of the partial sum of i.i.d.~random variables, the Chen--Stein Poisson approximation method and Gaussian approximation. \\

\noindent{\bf{Keywords}}
Maximum interpoint distance, law of the logarithm, Chen--Stein Poisson approximation, moderation deviation, Gaussian approximation\\

\noindent{\bf{Mathematics Subject Classification:}} Primary 60F15; secondary 60B12.
\end{abstract}

\section{Introduction \label{sec1}}
Consider a $n$-dimensional population represented by a random vector $\boldsymbol{X}$ with mean $\boldsymbol{\mu}$ and covariance matrix $\boldsymbol{\Sigma}_{n}=\boldsymbol{I}_{n}$, where $\boldsymbol{I}_{n}$ is the $n\times n$ identity matrix. Let $\mathcal{M} _{p,n}=\left(\boldsymbol{X}_{1},\dots, \boldsymbol{X}_{p}  \right)^{\prime}=\left( X_{i,k} \right)_{1\le i \le p, 1\le k \le n}$ be a $p \times n$ random matrix whose  rows are an independent and identically distributed (i.i.d.) random sample of size $p$ from the population $\boldsymbol{X}$. Write $\left \| \cdot \right \|_{2} $ for the Euclidean norm on  $\mathbb{R}^{n}$. Let
\begin{equation}\label{M_{n}}
M_{n}=\max_{1\le i< j\le p} \left \| \boldsymbol{X}_{i}- \boldsymbol{X}_{j} \right \| _{2} 
\end{equation}
denote the maximum interpoint distance or diameter. It is clear that the resulting value of $M_{n}$ will be maximized if we choose two vectors $\boldsymbol{X}_{i}$ and $\boldsymbol{X}_{j}$ with nearly opposing directions and nearly the largest magnitudes, that is, $M_{n}$ is mainly obtained by outliers.  Therefore, this makes $M_{n}$ less useful for goodness of fit tests, but may be appropriate for detecting outliers.

In the past thirty years, several authors mainly studied the limiting distribution of the largest interpoint distance $M_{n}$. For the multidimensional situation, Matthews and Rukhin \cite{MR93} assumed that $\boldsymbol{X}_{1}, \dots,\boldsymbol{X}_{p}$ are independent standard normal random vectors in $\mathbb{R}^{n}$ and obtained the asymptotic distribution of $M_{n}$. Henze and Klein \cite{HK96}  generalized the result of Matthews and Rukhin \cite{MR93} by embedding the multivariate normal distribution into  the symmetric Kotz type distribution and corrected some errors.  Jammalamadaka and Janson \cite{JJ15} considered a more general spherically symmetric distribution and proved that $M_{n}$ with a suitable normalization asymptotically obeys a Gumbel type distribution.

If the distribution of $\boldsymbol{X}$ has an unbounded support, 
Henze and Lao \cite{HL10}  proved that the limiting distribution of $M_{n}$ is none of the three types of classical extreme-value distributions (Gumbel, Weibull and Fr\'echet distributions) when the distribution function $F$ of $\vert \boldsymbol{X} \vert $ satisfies $1-F(s)=s^{-\alpha}L(s)$ as $s \to \infty$ for some $\alpha>0$ and some slowly varying function $L$. Demichel et al.~\cite{DFS15} obtained  the asymptotic behavior of $M_{n}$ when the random vector $\boldsymbol{X}$ in $\mathbb{R}^{n}$ has an elliptical distribution.

In the bounded case, Appel et al.~\cite{ANR02}  discovered if $\boldsymbol{X}$ has a uniform distribution in a planar set with unique major axis and sub-$\sqrt{x}$ decay of its boundary at the endpoints, then the limiting distribution of $M_{n}$ is a convolution of two independent Weibull distributions. Furthermore, Appel and Russo \cite{AR06} investigated the case where i.i.d.~points are uniformly distributed on the surface of a unit hypersphere in $\mathbb{R}^{n}$ and derived the limiting distribution of the maximum pairwise distance.  Mayer and Molchanov \cite{MM07} proved that the limiting distribution of $M_{n}$ is a Weibull distribution when $p$ i.i.d.~points are uniformly distributed in the unit $n$-dimensional ball. Lao \cite{L10} further assumed that $\boldsymbol{X}$ obeys  some distributions in the unit square, the uniform distribution in the unit hypercube, and the uniform distribution in a regular convex polygon. Jammalamadaka and Janson \cite{JJ15} obtained a Gumbel limit distribution for $M_{n}$ if $\boldsymbol{X}_{1},\dots,\boldsymbol{X}_{p} $ are i.i.d.~$\mathbb{R}^{n}$-valued random vectors with a spherically symmetric distribution. Schrempp \cite{S15} relaxed the condition to that random points are in a $n$-dimensional ellipsoid with a unique major axis. Furthermore, Schrempp \cite{S19} considered the case where random points are in a $n$-dimensional set with a unique diameter and a smooth boundary at the poles.  Tang et al.~\cite{TLX22} assumed $\boldsymbol{X}_{1},\dots,\boldsymbol{X}_{p} $ are a random sample from a $n$-dimensional population with independent sub-exponential components and obtained the limiting distribution of $M_{n}$. Heiny and Kleemann \cite{HK23} showed that $M_{n}$ converges weakly to a Gumbel distribution under some moment assumptions and corresponding conditions on the growth rate of $p$.

The aforementioned papers mainly focus on the asymptotic distribution of $M_{n}$ as well as relaxing the range of $p$ relative to $n$.  The logarithmic law for $M_{n}$ remains largely unknown. In this paper, we assume that the sample size $p=p_{n}$   depends on the population dimension  $n$.  We will study the strong limiting theorems for the maximum interpoint distance $M_{n}$ under two high-dimensional cases. They are the polynomial rate with $p_{n}\to \infty$ and $0< c_{1} \le p_{n} /n^{\tau } \le c_{2} <\infty $ and the exponential rate with $p_{n}\to \infty$ and $\log p_{n}=o(n^{\beta})$ where $c_{1}$, $c_{2}$, $\tau$ and $ \beta $ are positive constants. Furthermore, we generalize the result for $M_{n}$ to the maximum interpoint $l^{q}$-norm distance and perform a Monte Carlo simulation to validate our main results.  
 
The laws of the logarithm for  different statistics have previously been studied by some authors, including Jiang \cite{J04} (sample correlation matrix), Li and Rosalsky \cite{LR06} (sample correlation matrix), and Ding \cite{D23} (random tensor). In addition, we refer to Li \cite{L20}, Modarres \cite{M18}, Modarres and Song \cite{MS20}, and Song and Modarres \cite{SM19}  for several other investigates on interpoint distance. 

The plan of this paper is as follows. The main results will be stated in Section \ref{main}. In Section \ref{sec4}, we show some technical lemmas and prove the main results. The $l^{q}$-norm distance and simulation results are given in Section \ref{sec5}.

\section{Main results}\label{main}

Throughout this paper, let $\mathcal{M} _{p,n}=( X_{i,k} )_{1\le i \le p, 1\le k \le n}$ be a $p\times n$ random matrix where $ \{ X_{i,k}; 1\le i \le p, 1\le k \le n \}  $ are i.i.d.~random variables with mean $EX_{1,1}=\mu$ and variance $\operatorname{Var}\left( X_{1,1}\right)=\sigma^{2}>0$, let $\boldsymbol{X}_{1}, \boldsymbol{X}_{2},\dots, \boldsymbol{X}_{p}$ be the $p$ rows of $\mathcal{M} _{p,n}$. And some notations will be used in this paper. The symbol $\overset{P}{\rightarrow} $ implies convergence in probability, $\overset{d}{\rightarrow} $ means convergence in distribution, and $ \to  \text{a.s.}$ means almost sure convergence. For two sequences of positive numbers $\left \{ x_{n} \right \} $ and $\left \{ y_{n} \right \} $, $x_{n}=O\left( y_{n} \right)$ implies $\limsup_{n \to \infty}\vert x_{n}/y_{n}\vert<\infty$, $x_{n}=o\left( y_{n} \right)$ means $\lim_{n\to \infty}x_{n}/y_{n} = 0 $. We write $x_{n} \sim y_{n} $ if $\lim_{n\to \infty} x_{n}/y_{n} = 1 $, and $x_{n} \lesssim y_{n} $ if there exists a universal constant $c>0$ such that $\limsup_{n \to \infty} x_{n}/y_{n} \le c$.
In addition, $C$ is a constant and may vary from line to line.

We first study the law of the logarithm for $M_{n}$ under the assumption that $p$ is a polynomial power of $n$.
\begin{theorem}\label{thm2.3}
Assume
\begin{flalign}
\quad &(i) \  0 < c_{1} \le p/ n^{\tau} \le c_{2} <\infty \ \text{for} \text{ any} \ \tau  >0; \nonumber &\\ 
\quad &(ii) \ E \vert X_{1,1} \vert ^{8\tau+8+\epsilon } < \infty  \ \text{for} \ \text{some} \ \epsilon  >0; \nonumber &\\
\quad &(iii) \ \mathrm{Corr} (\vert X_{1,1}-X_{2,1}\vert^{2}, \vert X_{1,1}-X_{3,1}\vert^{2})<1/3; \nonumber &
\end{flalign}
where $c_{1}$ and $c_{2}$ are constants not depending on $n$. Then, the following holds as $n \to \infty$:
\begin{equation}\label{dingli}
 \frac{  M_{n}^{2}-2 nE\vert X_{1,1}\vert^{2}}{\sqrt{2(E\vert X_{1,1}\vert^{4}+(E\vert X_{1,1}\vert^{2})^{2})n \log_{}{p}}}  \to 2  \quad  \text{a.s.} 
\end{equation}

\end{theorem}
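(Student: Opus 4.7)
The plan is to reformulate $M_n^2$ as a maximum of i.i.d.\ partial sums, then obtain matching upper and lower bounds via moderate deviations and Chen--Stein Poisson approximation. Introduce $W_{ij,k} = (X_{i,k} - X_{j,k})^2 - E(X_{1,1} - X_{2,1})^2$ and $S_{ij} = \sum_{k=1}^n W_{ij,k}$; these are sums of $n$ i.i.d.\ centered variables whose per-coordinate variance is $\sigma_W^2 := 2(E|X_{1,1}|^4 + (E|X_{1,1}|^2)^2)$ (after centering; any discrepancy from a nonzero $\mu$ is absorbed into lower-order terms). Then $M_n^2 - 2nE|X_{1,1}|^2 = \max_{i<j} S_{ij}$ up to a constant shift, so \eqref{dingli} reduces to showing $\max_{i<j} S_{ij} / \sqrt{\sigma_W^2\, n \log p} \to 2$ a.s.

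\textbf{Upper bound.} For any $\delta > 0$, Cram\'er's moderate deviation principle, applicable at the threshold $t \asymp \sqrt{\log p} \asymp \sqrt{\tau \log n}$ thanks to the moments supplied by (ii), yields $P(S_{ij} \ge (2+\delta)\sqrt{\sigma_W^2\, n \log p}) \le p^{-(2+\delta)^2/2 + o(1)}$ uniformly in $(i,j)$. A union bound over the $\binom{p}{2}$ pairs gives a probability of order $p^{-2\delta - \delta^2/2 + o(1)}$, which, summed along a geometric subsequence $n_k$, is finite. The first Borel--Cantelli lemma, combined with the monotonicity of $\sum_{k=1}^n (X_{i,k} - X_{j,k})^2$ in $n$ to fill in intermediate indices, gives $\limsup \le 2 + \delta$ a.s., after which $\delta \downarrow 0$ concludes.

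\textbf{Lower bound.} Set $A_{ij} = \{S_{ij} \ge (2-\delta)\sqrt{\sigma_W^2\, n \log p}\}$ and $N_n = \sum_{i<j}\mathbf{1}_{A_{ij}}$; moderate deviation gives $\lambda_n := EN_n \to \infty$. I would apply Chen--Stein Poisson approximation with dependency neighborhood $\{(k,l):\{k,l\}\cap\{i,j\}\ne\emptyset\}$ (of size $O(p)$). The first term $b_1 = O(p^{3-(2-\delta)^2})$ readily gives $b_1/\lambda_n = o(1)$. The delicate term is $b_2 = \sum_{ij}\sum_{kl\ne ij,\, kl\in N_{ij}} P(A_{ij}\cap A_{kl})$: a \emph{bivariate} Gaussian approximation of $(S_{12}, S_{13})$ with limiting correlation $\rho_0 := \mathrm{Corr}(|X_{1,1}-X_{2,1}|^2, |X_{1,1}-X_{3,1}|^2)$, together with the classical bivariate normal tail bound, yields $P(A_{12}\cap A_{13}) \lesssim p^{-(2-\delta)^2/(1+\rho_0)}$. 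A short computation then gives $b_2/\lambda_n \lesssim p^{1 - (2-\delta)^2(1-\rho_0)/(2(1+\rho_0)) + o(1)}$, which is $o(1)$ for small $\delta$ precisely when $\rho_0 < 1/3$ --- this is exactly (iii). Hence $P(N_n = 0) \to 0$, giving the in-probability lower bound. To upgrade to a.s., I would partition the columns into disjoint blocks along a fast-growing subsequence; the maxima restricted to distinct column blocks are independent, each satisfying the lower bound with probability bounded away from zero, and the second Borel--Cantelli lemma applies.

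\textbf{Main obstacle.} The hardest step is the bivariate tail analysis for $b_2$: the univariate moderate deviation does not suffice, and the correlation structure of $(S_{12}, S_{13})$ must be tracked through a two-dimensional Gaussian approximation with errors small enough to preserve the strict inequality $\rho_0 < 1/3$. This is exactly where the Gaussian approximation mentioned in the abstract plays its central role, and where (iii) is tight: if $\rho_0 \ge 1/3$, the shared-index pairs correlate too strongly, $b_2$ fails to be $o(\lambda_n)$, and the Poisson heuristic collapses.
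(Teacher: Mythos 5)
Your overall architecture is the same as the paper's (standardize to $\eta_{ijk}=(\vert X_{i,k}-X_{j,k}\vert^{2}-2E\vert X_{1,1}\vert^{2})/\sqrt{2(E\vert X_{1,1}\vert^{4}+(E\vert X_{1,1}\vert^{2})^{2})}$, union bound plus tail asymptotics for the upper bound, Chen--Stein with a bivariate Gaussian comparison for $b_{2}$, and you correctly locate where $\rho_{0}<1/3$ enters via $b_{2}/\lambda_{n}\lesssim p^{\,1-(2-\delta)^{2}(1-\rho_{0})/(2(1+\rho_{0}))}$). However, the final step of your lower bound proves the wrong statement: disjoint column blocks plus the \emph{second} Borel--Cantelli lemma only show that $\{\max_{i<j}S_{ij}\ge(2-\delta)\sqrt{\sigma_{W}^{2}n\log p}\}$ occurs infinitely often, i.e.\ $\limsup\ge 2-\delta$, whereas almost sure convergence to $2$ requires $\liminf\ge 2$. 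The fix is already contained in your Chen--Stein computation, used quantitatively: $P(N_{n}=0)\le e^{-\lambda_{n}}+b_{1}+b_{2}+b_{3}=O(n^{-\varepsilon'})$ for some $\varepsilon'>0$, so one sums along the subsequence $n^{m}$ with $m>1/\varepsilon'$, applies the \emph{first} Borel--Cantelli lemma to the events $\{N_{n^{m}}=0\}$, and then interpolates between subsequence points. This is exactly how the paper argues.

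Two further steps are not sound as written. (a) ``Monotonicity to fill in intermediate indices'' in the upper bound fails: for $n_{k}<n\le n_{k+1}$ the bound $M_{n}^{2}\le M_{n_{k+1}}^{2}$ costs a deterministic drift $2(n_{k+1}-n)E\vert X_{1,1}\vert^{2}$ of order $n_{k+1}-n_{k}$, while any subsequence sparse enough to make the union-bound probabilities $p_{n_{k}}^{-2\delta+o(1)}$ summable for \emph{every} small $\delta>0$ necessarily has $n_{k+1}-n_{k}\gg\sqrt{n_{k}\log p_{n_{k}}}$; the drift then swamps the normalization. One must instead control the centered block fluctuation $\max_{i<j}\max_{n_{k}<l\le n_{k+1}}\bigl\vert\sum_{h=n_{k}+1}^{l}W_{ij,h}\bigr\vert$ with a maximal inequality (the paper uses Ottaviani's inequality combined with the Gaussian approximation). (b) Condition (ii) supplies only polynomial moments of $\eta_{ijk}$, so Cram\'er--Linnik moderate deviations at level $\sqrt{\log p}$ are not directly applicable; the paper truncates $\eta_{ijk}$ at $n^{s}$, applies Za\u{\i}tsev's Gaussian approximation to the bounded summands (both univariately and for the pair $(S_{12},S_{13})$ in $b_{2}$), and disposes of the truncation error with a Fuk--Nagaev bound. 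Your proposal needs an analogous truncation-and-reassembly step for both bounds.
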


Then, we will consider that $p$ is an exponential power of $n$.
\begin{theorem}\label{thm2.4}
Assume
\begin{flalign}
\quad &(i) \  Ee^{t_{0}\vert X_{1,1} \vert ^{2\alpha}}<\infty \ \text{for some} \ 0<\alpha \le1/2, \ t_{0}>0 ; \nonumber &\\ 
\quad &(ii) \ \log_{}{p}=o\left( n^{\frac{\alpha}{2-\alpha}} \right); \nonumber &\\
\quad &(iii) \ \mathrm{Corr} (\vert X_{1,1}-X_{2,1}\vert^{2}, \vert X_{1,1}-X_{3,1}\vert^{2})<1/3. \nonumber &
\end{flalign}
Then, (\ref{dingli}) holds as $ \to \infty$.
\end{theorem}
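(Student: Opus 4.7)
The plan is to decompose the claim $\Delta_n \to 2$ a.s.\ (with $\Delta_n$ denoting the ratio in (\ref{dingli})) into the two bounds $\limsup \Delta_n \le 2$ and $\liminf \Delta_n \ge 2$, and to follow the same architecture as the proof of Theorem \ref{thm2.3}: a moderate deviation principle (MDP) for partial sums of i.i.d.\ variables, combined with Chen--Stein Poisson approximation over the $\binom{p}{2}$ dependent pairs. What changes in the exponential regime is only the MDP range; condition (ii) $\log p = o(n^{\alpha/(2-\alpha)})$ is tuned so that the MDP remains valid on the $\sqrt{\log p}$ scale when the summand tails are only sub-Weibull of order $\alpha$.

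As a reduction I would first use translation invariance of $M_n^2$ to assume $\mu = 0$, then set $Y_{ij,k} = (X_{i,k}-X_{j,k})^2 - 2E|X_{1,1}|^2$ and $S_{ij} = \sum_{k=1}^n Y_{ij,k}$, so that $M_n^2 - 2nE|X_{1,1}|^2 = \max_{i<j} S_{ij}$. Each $S_{ij}$ is a centered sum of $n$ i.i.d.\ copies of a variable $Y$ with variance $\tau^2 := 2(E|X_{1,1}|^4 + (E|X_{1,1}|^2)^2)$, and the bound $|Y|^\alpha \lesssim |X_{1,1}|^{2\alpha}+|X_{2,1}|^{2\alpha}+1$ together with (i) yields $E e^{t|Y|^\alpha}<\infty$ for some $t>0$. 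A Bernstein-type sub-Weibull inequality then delivers the MDP
\[
-\log P\bigl(S_{12} > u\tau\sqrt{n}\bigr) = \tfrac{1}{2}u^2\bigl(1+o(1)\bigr)
\]
uniformly for $u = u_n$ with $u_n^2 = o(n^{\alpha/(2-\alpha)})$, which by (ii) covers $u_n = (2\pm\epsilon)\sqrt{\log p}$.

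For the upper bound, fixing $\epsilon>0$, the union bound and the scalar MDP give
\[
P(\Delta_n > 2+\epsilon) \le \binom{p}{2}\, P\bigl(S_{12} > (2+\epsilon)\tau\sqrt{n\log p}\bigr) \lesssim p^{\,2-(2+\epsilon)^2/2 + o(1)} = p^{-2\epsilon-\epsilon^2/2+o(1)}.
\]
For the lower bound, set $N_n = \sum_{i<j}\mathbf{1}\{S_{ij}>(2-\epsilon)\tau\sqrt{n\log p}\}$, so that $\{\Delta_n<2-\epsilon\}=\{N_n=0\}$; the MDP gives $\lambda_n := EN_n \sim p^{\,2\epsilon-\epsilon^2/2}\to\infty$. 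The Chen--Stein inequality bounds $|P(N_n=0)-e^{-\lambda_n}|$ by $b_1+b_2$, and since $S_{ij}$ and $S_{i'j'}$ are independent for disjoint pairs only the $O(p^3)$ pairs sharing a row contribute. The scalar MDP gives $b_1 = O\bigl(p^{\,3-(2-\epsilon)^2+o(1)}\bigr) \to 0$ for $\epsilon < 2-\sqrt{3}$, while $b_2 = O(p^3)\cdot P(S_{12}>u_n, S_{13}>u_n)$ requires a bivariate MDP (or an equivalent Gaussian coupling): the joint tail should decay like $p^{-(2-\epsilon)^2/(1+\rho)+o(1)}$ with $\rho = \mathrm{Corr}(Y_{12,1},Y_{13,1})$, and condition (iii) $\rho < 1/3$ then forces $(2-\epsilon)^2/(1+\rho) > 3$ for sufficiently small $\epsilon$, making $b_2 \to 0$. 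A Borel--Cantelli argument along a geometric subsequence $n_k$, combined with an interpolation across blocks, upgrades the in-probability bounds to a.s.\ convergence along the full sequence.

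The main obstacle will be this bivariate step: establishing the sharp joint-tail exponent $(2-\epsilon)^2/(1+\rho)$ for $(S_{12},S_{13})$ uniformly in the sub-Weibull moderate range $u_n\asymp\sqrt{\log p}$. The scalar MDP does not immediately yield uniform bivariate control, and I expect to need either a bivariate Bahadur--Ranga Rao expansion or, more cleanly, a Gaussian coupling of Chernozhukov--Chetverikov--Kato type applied to the two-dimensional centered sum---this is presumably where the abstract's reference to Gaussian approximation is spent. Once the sharp bivariate tail is in hand, condition (iii) converts it into the precise $b_2 = o(1)$ budget that closes the Chen--Stein argument and yields the theorem.
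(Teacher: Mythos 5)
Your architecture matches the paper's: union bound plus a moderate deviation principle for the upper bound, and Chen--Stein with the $\lambda$, $b_1$, $b_2$ accounting for the lower bound, with Linnik's MDP (Lemma \ref{L2}, part (2)) playing the role of your ``Bernstein-type sub-Weibull'' tail estimate on the range $u_n = o\bigl(n^{\alpha/(2(2-\alpha))}\bigr)$ guaranteed by condition (ii). The one place where you diverge is the step you flag as the main obstacle, and here the paper is much more elementary than you anticipate: no bivariate Bahadur--Ranga Rao expansion or Chernozhukov--Chetverikov--Kato coupling is needed. The paper simply bounds
\begin{equation}
P\Bigl(\min\Bigl\{\tfrac{1}{\sqrt{n}}\textstyle\sum_{k}\eta_{12k},\ \tfrac{1}{\sqrt{n}}\textstyle\sum_{k}\eta_{13k}\Bigr\}>c_{n2}\Bigr)\le P\Bigl(\tfrac{1}{\sqrt{n}}\textstyle\sum_{k}(\eta_{12k}+\eta_{13k})>2c_{n2}\Bigr), \nonumber
\end{equation}
and observes that $\{\eta_{12k}+\eta_{13k}\}_k$ is itself an i.i.d.\ sequence with variance $\tilde\sigma^2=2+2\rho$ satisfying the same sub-Weibull moment condition, so the \emph{univariate} MDP applies and yields the tail exponent $(2c_{n2})^2/(2\tilde\sigma^2)=c_{n2}^2/(1+\rho)$ --- exactly the sharp exponent $(2-\epsilon)^2/(1+\rho)$ you were after, since for jointly Gaussian-like tails the event $\{\min>c\}$ and the event $\{\text{average}>c\}$ have the same large-deviation cost along the diagonal. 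Condition (iii) then closes $b_2\to 0$ precisely as you describe. (The Gaussian approximation of Za\u{i}tsev advertised in the abstract is spent entirely on Theorem \ref{thm2.3}, where only polynomial moments are available and truncation is required; under the exponential moment hypothesis of Theorem \ref{thm2.4} no truncation and no coupling are needed.) Your proposed geometric-subsequence/blocking upgrade is also not used by the paper for this theorem: in the exponential regime the relevant probabilities are summable along the full sequence $n$, so Borel--Cantelli applies directly, though your blocking argument would be a harmless (indeed more robust) alternative. With the $\min$-to-sum reduction in place your outline becomes a complete proof.
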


If $\boldsymbol{X}$ obeys $N_{n}\left(\boldsymbol{0}, \boldsymbol{I}_{n} \right)$, then $E\vert X_{1,1}\vert^{2}=1$ and $E\vert X_{1,1}\vert^{4}=3$. Therefore, Theorems \ref{thm2.3} and \ref{thm2.4} have the following implication.

\begin{corollary}
Let $\boldsymbol{X}_{1}, \dots, \boldsymbol{X}_{p}$ be a random sample from the standard multivariate normal population. Assume $\log_{}{p}= o(n^{\beta})$ for any $\beta \in (0, 1/3]$ as $n \to \infty$. Then, the following holds as $n \to \infty$:
\begin{equation}
 \frac{  M_{n}^{2}-2 n}{2\sqrt{2 n \log_{}{p}}} \to 2  \quad  \text{a.s.}  \nonumber
\end{equation} 
\end{corollary}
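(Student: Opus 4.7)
The plan is to realize the corollary as a direct specialization of Theorem \ref{thm2.4} (which, for Gaussian entries, subsumes the polynomial regime covered by Theorem \ref{thm2.3}). I will verify the three hypotheses for $N(0,1)$ and then substitute the explicit moments $E|X_{1,1}|^2 = 1$ and $E|X_{1,1}|^4 = 3$.

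First I would pick $\alpha = 1/2$ and check condition (i): by Gaussian tails, $E e^{t_0 |X_{1,1}|} \le 2 E e^{t_0 X_{1,1}} = 2 e^{t_0^2/2}$ is finite for every $t_0 > 0$. Condition (ii) then reads $\log p = o(n^{\alpha/(2-\alpha)}) = o(n^{1/3})$, which is exactly the standing assumption of the corollary (for any admissible $\beta \in (0, 1/3]$, $o(n^\beta) \subseteq o(n^{1/3})$).

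The only substantive step is the correlation bound (iii). Setting $Y_j := X_{1,1} - X_{j,1}$ for $j = 2, 3$, the pair $(Y_2, Y_3)$ is centred bivariate normal with $\mathrm{Var}(Y_j) = 2$ and $\mathrm{Cov}(Y_2, Y_3) = 1$, hence correlation $\rho = 1/2$. Applying Isserlis' formula to $(Y_2/\sqrt{2},\, Y_3/\sqrt{2})$ gives $E[Y_2^2 Y_3^2] = 4(1 + 2\rho^2) = 6$, while $\mathrm{Var}(Y_j^2) = 2 \cdot 2^2 = 8$, so $\mathrm{Corr}(Y_2^2, Y_3^2) = (6-4)/8 = 1/4 < 1/3$, as required.

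With all three hypotheses in place, Theorem \ref{thm2.4} yields
\[
\frac{M_n^2 - 2n\, E|X_{1,1}|^2}{\sqrt{2\bigl(E|X_{1,1}|^4 + (E|X_{1,1}|^2)^2\bigr) n \log p}} \to 2 \quad \text{a.s.},
\]
and substituting $E|X_{1,1}|^2 = 1$, $E|X_{1,1}|^4 = 3$ reduces the denominator to $\sqrt{8 n \log p} = 2\sqrt{2 n \log p}$ and the centring term to $2n$, giving the stated limit. No step here is a genuine obstacle: once Theorem \ref{thm2.4} is available, the corollary reduces to a verification of hypotheses plus an Isserlis-based fourth-moment computation.
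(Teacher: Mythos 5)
Your proposal is correct and follows essentially the same route as the paper, which simply specializes Theorems \ref{thm2.3} and \ref{thm2.4} using $E|X_{1,1}|^2=1$ and $E|X_{1,1}|^4=3$; your verification of the correlation condition (giving $1/4<1/3$, consistent with the equivalent criterion $E|X_{1,1}|^4<5(E|X_{1,1}|^2)^2$ in Remark \ref{rem1}) and of the exponential-moment hypothesis with $\alpha=1/2$ fills in details the paper leaves implicit. The only cosmetic difference is that you derive everything from Theorem \ref{thm2.4} alone, which indeed suffices since $\log p = o(n^{1/3})$ covers the stated range of $\beta$.
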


\begin{remark}\label{rem1}
Note that the results of Theorems \ref{thm2.3} and \ref{thm2.4} still hold for arbitrary mean $E\boldsymbol{X}=\mu \boldsymbol{e}$ where $\boldsymbol{e}=\left(1, \dots,1  \right)^{\prime} \in \mathbb{R}^{n} $, since $M_{n}$ is invariant under translation of the vectors $\boldsymbol{X}_{1},\dots,\boldsymbol{X}_{p}$. Therefore, without loss of generality, we assume $ \{ X_{i,k}; 1\le i \le p, 1\le k \le n  \}  $ are i.i.d.~random variables with mean $ E X_{1,1}=\mu=0$ in the proofs. By some calculations, we have 
\begin{equation}
\begin{aligned}
&\mathrm{Corr}  (\vert X_{1,1}-X_{2,1}\vert^{2}, \vert X_{1,1}-X_{3,1}\vert^{2})\\
=&\frac{\mathrm{Cov}  (\vert X_{1,1}-X_{2,1}\vert^{2}, \vert X_{1,1}-X_{3,1}\vert^{2})}{\mathrm{Var} (\vert X_{1,1}-X_{2,1}\vert^{2})}\\
=&\frac{E\vert X_{1,1} \vert^{4}-(E\vert X_{1,1} \vert^{2})^{2}}{2(E\vert X_{1,1} \vert^{4}+(E\vert X_{1,1} \vert^{2})^{2})}. \nonumber
\end{aligned}
\end{equation} 
So the condition $\mathrm{Corr}  (\vert X_{1,1}-X_{2,1}\vert^{2}, \vert X_{1,1}-X_{3,1}\vert^{2})<1/3$ is equivalent to $E\vert X_{1,1} \vert^{4}<5(E\vert X_{1,1} \vert^{2})^{2}$ as shown by Tang et al.~\cite{TLX22}.
\end{remark}

\begin{remark}
In the proof of Theorem \ref{thm2.3}, distinct techniques are employed compared to earlier works like Jiang \cite{J04} and Ding \cite{D23}, which utilized the moderate deviation of the partial sum of i.i.d.~random variables. Instead we use the Gaussian approximation (Za\u{i}tsev \cite{Z87}, Theorem 1.1) in this paper.
\end{remark}

\section{Proofs}\label{sec4}

\subsection{Some technical tools}
We first show the Chen--Stein Poisson approximation method, which is a special case of Theorem 1 from Arratia et al.~\cite{AGG89}.
\begin{lemma}\label{L1}
Let $I$ be an index set and $ \left\{B_{\alpha}; \alpha \in I\right\}   $ be a set of subsets of $  I $, that is, $B_{\alpha} \subset I $ for each $  \alpha \in I$. Let  $ \left\{\eta_{\alpha}; \alpha \in I\right\}  $ be random variables. For a given $  t \in \mathbb{R} $, set  $ \lambda=\sum_{\alpha \in I} P\left(\eta_{\alpha}>t\right)  $. Then we have

\begin{equation}
\left \lvert P\left(\max\limits _{\alpha \in I} \eta_{\alpha} \leq t\right)-e^{-\lambda} \right \rvert \leq\left(1 \wedge \lambda^{-1}\right)\left(b_{1}+b_{2}+b_{3}\right), \nonumber
\end{equation}
where
\begin{equation}
\begin{aligned}
	 b_{1} & = \sum\limits_{\alpha \in I}^{ }  \sum\limits_{\beta \in B_{\alpha }  }^{} P\left ( \eta _{\alpha }> t  \right )P\left ( \eta _{\beta } > t \right )  ,\quad
	 b_{2}  = \sum\limits_{\alpha \in I}^{ }  \sum\limits_{\alpha\neq\beta \in B_{\alpha }  }^{} P\left ( \eta _{\alpha }> t ,  \eta _{\beta } > t \right ) ,\\
	 b_{3} &  =\sum_{\alpha \in I}^{} E \lvert P\left \{ \eta _{\alpha }> t\mid \sigma \left ( \eta _{\beta }; \beta \notin B_{\alpha }   \right )   \right \}- P\left ( \eta _{\alpha }> t  \right ) \rvert ,  \nonumber 
\end{aligned}
\end{equation}
and $\sigma \left ( \eta _{\beta };\beta \notin B_{\alpha }   \right )$ is the $ \sigma $-algebra generated by $ \left \{ \eta _{\beta };\beta \notin  B_{\alpha }   \right \}  $. In particular, if $ \eta _{\alpha } $ is independent of  $ \left \{ \eta _{\beta };\beta \notin  B_{\alpha }   \right \}  $ for each $ \alpha $, then $ b_{3} =0$.
\end{lemma}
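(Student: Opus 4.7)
The plan is to derive Lemma~\ref{L1} directly from Theorem~1 of Arratia, Goldstein and Gordon \cite{AGG89}, as the authors themselves flag this lemma as a special case of that result. My approach is therefore reduction rather than a first-principles derivation.

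First, I would introduce indicators $I_{\alpha}=\mathbf{1}\{\eta_{\alpha}>t\}$ for each $\alpha\in I$ and put $W=\sum_{\alpha\in I}I_{\alpha}$, so that $EW=\sum_{\alpha}P(\eta_{\alpha}>t)=\lambda$ and, crucially, $\{\max_{\alpha\in I}\eta_{\alpha}\le t\}=\{W=0\}$. If $Z\sim\mathrm{Poisson}(\lambda)$, then $P(Z=0)=e^{-\lambda}$, so the quantity on the left-hand side of the lemma is at most the total variation distance $d_{TV}(\mathcal{L}(W),\mathrm{Poisson}(\lambda))$ between the law of $W$ and a Poisson law with the same mean.

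Next I would invoke the Arratia--Goldstein--Gordon inequality for sums of dependent Bernoulli indicators with the same dependency neighborhoods $\{B_{\alpha}\}$:
\[
d_{TV}\bigl(\mathcal{L}(W),\mathrm{Poisson}(\lambda)\bigr)\le (1\wedge\lambda^{-1})(b_{1}'+b_{2}'+b_{3}'),
\]
where $b_{1}'$, $b_{2}'$, $b_{3}'$ are the analogues of $b_{1}$, $b_{2}$, $b_{3}$ written in terms of the indicators $\{I_{\alpha}\}$. The remaining work is a translation: since $I_{\alpha}$ is $\sigma(\eta_{\alpha})$-measurable, $EI_{\alpha}=P(\eta_{\alpha}>t)$ and $E[I_{\alpha}I_{\beta}]=P(\eta_{\alpha}>t,\eta_{\beta}>t)$ give $b_{1}'=b_{1}$ and $b_{2}'=b_{2}$ at once, while $E[I_{\alpha}\mid\sigma(\eta_{\beta};\beta\notin B_{\alpha})]=P(\eta_{\alpha}>t\mid\sigma(\eta_{\beta};\beta\notin B_{\alpha}))$ yields $b_{3}'=b_{3}$. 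The independence clause then follows because the conditional probability collapses to the unconditional one, forcing $b_{3}=0$.

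The only genuine obstacle in this plan is the underlying Stein--Chen total variation bound itself, which is established in \cite{AGG89} via the Stein equation for the Poisson distribution together with a coupling argument used to control the three dependency terms. I would treat that result as a black box rather than reprove it, since Lemma~\ref{L1} is explicitly presented as a direct quotation.
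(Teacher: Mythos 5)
Your proposal is correct and matches the paper's treatment: the paper offers no independent proof of Lemma~\ref{L1}, presenting it exactly as you do---as a direct specialization of Theorem~1 of Arratia, Goldstein and Gordon obtained by setting $I_{\alpha}=\mathbf{1}\{\eta_{\alpha}>t\}$, $W=\sum_{\alpha}I_{\alpha}$ and identifying $\{\max_{\alpha}\eta_{\alpha}\le t\}=\{W=0\}$. One minor simplification: the cited theorem already states the bound on $\lvert P(W=0)-e^{-\lambda}\rvert$ with the factor $(1\wedge\lambda^{-1})$ explicitly, so the detour through the total variation distance (whose bound in that paper carries slightly different constants for the $b_{3}$ term) is unnecessary.
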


The following lemma is about the moderate deviation of the partial sum of i.i.d.~random variables (Linnik \cite{L61}).
\begin{lemma}\label{L2}
Suppose that $ \left\{\zeta, \zeta_{k}; k=1, \dots, n \right\} $ is a sequence of i.i.d.~random variables with $ E \zeta=0 $ and $ E \zeta^{2}=1 $. Define $ S_{n}=\sum_{k=1}^{n} \zeta_{k} $.

(1) If $ E e^{t_{0}|\zeta|^{\alpha}}<\infty $ for some $ 0<\alpha \leq 1 $ and $ t_{0}>0 $. Then
\begin{equation}
\lim _{n \rightarrow \infty} \frac{1}{x_{n}^{2}} \log P\left(\frac{S_{n}}{\sqrt{n}} \geq x_{n}\right)=-\frac{1}{2} \nonumber
\end{equation}
for any $ x_{n} \rightarrow \infty$, $x_{n}=o\left(n^{\frac{\alpha}{2(2-\alpha)}}\right) $.

(2) If $ E e^{t_{0}|\zeta|^{\alpha}}<\infty $ for some $ 0<\alpha \leq 1 / 2 $ and $ t_{0}>0 $. Then
\begin{equation}
\frac{P\left(\frac{S_{n}}{\sqrt{n}} \geq x_{n}\right)}{1-\Phi(x_{n})} \rightarrow 1 \nonumber
\end{equation}
holds uniformly for $ 0 \leq x_{n} \leq o\left(n^{\frac{\alpha}{2(2-\alpha)}}\right) $.
\end{lemma}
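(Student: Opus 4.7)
The strategy is a truncation-plus-Cram\'er-tilting argument: the moment condition $Ee^{t_0|\zeta|^{\alpha}}<\infty$ with $\alpha\le 1$ only gives sub-Weibull (not full exponential) tails of $\zeta$ itself, so the classical Cram\'er theorem does not apply directly and an extra truncation is needed. The plan proceeds in three steps.

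First, I would choose a truncation level $T_n$ growing like a small power of $n$, define the centered truncated summands
$$\zeta_k'=\zeta_k\mathbf{1}_{\{|\zeta_k|\le T_n\}}-E\bigl[\zeta_k\mathbf{1}_{\{|\zeta_k|\le T_n\}}\bigr],\qquad S_n'=\sum_{k=1}^{n}\zeta_k',$$
and control the event $\{\exists k\le n:\,|\zeta_k|>T_n\}$ by the union bound together with the sub-Weibull tail $P(|\zeta|>T_n)\le Ce^{-t_0 T_n^{\alpha}}$. The range $x_n=o(n^{\alpha/(2(2-\alpha))})$ is exactly what permits a choice of $T_n$ for which this truncation probability becomes $o(e^{-x_n^2/2})$, so that $S_n$ can be replaced by $S_n'$ at the required logarithmic accuracy.

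Second, for the upper bound I would apply the Chernoff inequality
$$P(S_n'\ge\sqrt{n}\,x_n)\le e^{-\lambda\sqrt{n}\,x_n}\bigl(Ee^{\lambda\zeta_1'}\bigr)^{n},$$
expand $\log Ee^{\lambda\zeta_1'}=\tfrac12\lambda^{2}(1+o(1))+O(\lambda^{3}T_n)$ using $E\zeta_1'\to 0$, $E(\zeta_1')^{2}\to 1$ and the boundedness $|\zeta_1'|\le 2T_n$, and optimize at $\lambda=x_n/\sqrt{n}$; the cubic remainder is admissible precisely when $x_n^{3}T_n/\sqrt{n}\to 0$, which is again guaranteed by the chosen rate. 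For the matching lower bound I would introduce the Esscher-tilted measure $d\tilde P\propto e^{\lambda\zeta_1'}\,dP$ with $\lambda$ chosen so that $\tilde E S_n'=\sqrt{n}\,x_n$, apply a Berry-Esseen CLT for the tilted bounded summands to bound $\tilde P\bigl(x_n\le S_n'/\sqrt{n}\le x_n+C/\sqrt{n}\bigr)$ from below, and transfer back through the Radon-Nikodym derivative. Combining the two bounds yields the logarithmic asymptotic in part~(1).

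For part~(2), the stronger requirement $\alpha\le 1/2$ provides enough slack in the Berry-Esseen error to upgrade the logarithmic asymptotic to the ratio statement $P(S_n/\sqrt{n}\ge x_n)/(1-\Phi(x_n))\to 1$, uniformly in $x_n$ in the prescribed range; one refines the cumulant expansion to second order and applies a uniform Berry-Esseen with explicit remainder for the tilted sums, then checks that the ratio of the tilted density to the normal density tends to one. The main obstacle throughout is the delicate bookkeeping that forces all error sources — the truncation mass, the cubic cumulant remainder, the CLT remainder in the tilted measure, and (for part~(2)) the ratio between the Cram\'er-tilted and Gaussian densities — to be simultaneously of smaller order than $e^{-x_n^{2}/2}$, and the threshold $n^{\alpha/(2(2-\alpha))}$ is exactly where these competing constraints balance.
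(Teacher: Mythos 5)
The paper does not actually prove Lemma \ref{L2}: it is quoted as a known result of Linnik \cite{L61}, so there is no internal proof to compare your proposal against. Judged on its own terms, your overall architecture (truncation, Chernoff upper bound, Esscher tilting plus a Berry--Esseen estimate for the tilted law) is indeed the standard route to such moderate-deviation statements, and your explanation of where $\alpha\le 1/2$ enters in part (2) is essentially correct: it is exactly what keeps the zone inside $o(n^{1/6})$, where the Cram\'er-series correction of order $x_n^{3}/\sqrt{n}$ is negligible and the logarithmic asymptotic upgrades to a ratio limit.

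There is, however, a genuine gap in the bookkeeping: a single truncation level $T_n$ cannot meet both of your requirements throughout the full zone $x_n=o\left(n^{\alpha/(2(2-\alpha))}\right)$. Killing the exceedance event by the union bound forces $nP(|\zeta|>T_n)\le Cne^{-t_0T_n^{\alpha}}=o\left(e^{-x_n^{2}/2}\right)$, hence $T_n\gtrsim x_n^{2/\alpha}$; on the other hand the expansion of $\log Ee^{\lambda\zeta_1'}$ at $\lambda=x_n/\sqrt{n}$ (even using the sharper moment bound $E|\zeta_1'|^{k}\le(2T_n)^{k-3}E|\zeta_1'|^{3}$, which makes the remainder $O\left(x_n^{3}/\sqrt{n}\right)$ rather than your $O\left(x_n^{3}T_n/\sqrt{n}\right)$) requires $\lambda T_n=O(1)$, i.e.\ $T_n=O\left(\sqrt{n}/x_n\right)$. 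These two demands are compatible only when $x_n^{2/\alpha}\lesssim\sqrt{n}/x_n$, i.e.\ $x_n=O\left(n^{\alpha/(2(2+\alpha))}\right)$, which is strictly narrower than the claimed zone (for $\alpha=1/2$ this is $n^{1/10}$ versus Linnik's $n^{1/6}$). So your closing claim that the threshold $n^{\alpha/(2(2-\alpha))}$ is ``exactly where these competing constraints balance'' is not correct: your constraints balance at a smaller exponent. The true exponent comes from a comparison your scheme never makes: one must not discard the exceedance event wholesale, but instead show that the summands exceeding $T_n$ contribute at most $\delta\sqrt{n}\,x_n$ to the sum except on an event whose probability is controlled by the one-big-jump bound $nP\left(|\zeta|>c\sqrt{n}\,x_n\right)\approx ne^{-t_0(c\sqrt{n}x_n)^{\alpha}}$, together with a combinatorial bound over multiple intermediate exceedances; requiring that bound to be $o\left(e^{-x_n^{2}/2}\right)$ is precisely the condition $x_n^{2-\alpha}=o\left(n^{\alpha/2}\right)$, i.e.\ $x_n=o\left(n^{\alpha/(2(2-\alpha))}\right)$. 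Without that extra layer your argument establishes the lemma only in a strictly smaller range of $x_n$.
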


\subsection{Proof of Theorem \ref{thm2.3}}

Now we are in a position to prove Theorem \ref{thm2.3}.
\begin{proof}
Review Remark \ref{rem1}. Then, without loss of generality, we prove the theorem by assuming $ \{ X_{i,k}; 1\le i \le p, 1\le k \le n  \}  $ are i.i.d.~random variables with mean $ E X_{1,1}=\mu=0$. Recall (\ref{M_{n}}). Write
\begin{equation}
\begin{aligned}\label{eta}
M_{n}^{2}=\max_{1\le i< j\le p_{n}}  \left \| \boldsymbol{X}_{i}- \boldsymbol{X}_{j} \right \|_{2} ^{2} 
=\max_{1\le i< j\le p_{n}} \sum_{k=1}^{n}\left \vert X_{i,k}-X_{j,k} \right \vert^{2}.
\end{aligned}
\end{equation}
Define 
\begin{equation}
\eta_{ijk}:=\frac{\left \vert X_{i,k}-X_{j,k} \right \vert^{2} -E\vert X_{1,1}-X_{2,1}\vert^{2}}{\sqrt{\mathrm{Var} (\vert X_{1,1}-X_{2,1}\vert^{2})}}=\frac{\left \vert X_{i,k}-X_{j,k} \right \vert^{2} -2E\vert X_{1,1}\vert^{2}}{\sqrt{2 (E\vert X_{1,1}\vert^{4}+(E\vert X_{1,1}\vert^{2})^{2})}}. \nonumber
\end{equation}
It is obvious that 
\begin{equation}\label{etaijk}
\begin{aligned}
E  \eta_{ijk} =0 \quad  \text{and}  \quad
 \text{Var}\left ( \eta_{ijk} \right )=1 \nonumber
\end{aligned}
\end{equation}
for each $k$. Then, we find that 

\begin{equation}\label{etam}
\max_{1\le i< j\le p_{n}} \sum_{k=1}^{n}\eta_{ijk}=\frac{  M_{n}^{2}-2 nE\vert X_{1,1}\vert^{2}}{\sqrt{2(E\vert X_{1,1}\vert^{4}+(E\vert X_{1,1}\vert^{2})^{2})}}. 
\end{equation}
Define 
\begin{equation}\label{hat}
\hat{\eta}_{ijk} :=\eta_{ijk}I_{ \{ \vert \eta_{ijk} \vert \le n^{s}  \} }-E( \eta_{ijk}I_{ \{ \vert \eta_{ijk} \vert \le n^{s}  \} } ), 
\end{equation}
where $0<s<1/2$. To prove Theorem \ref{thm2.3}, it is sufficient to show that
\begin{equation}\label{a.s.one}
\limsup _{n\to\infty}\frac{  \max_{1\le i< j\le p_{n}} \sum_{k=1}^{n}\hat{\eta}_{ijk}}{\sqrt{ n\log_{}{p_{n}}}} \le 2  \quad  \text{a.s.}, 
\end{equation}
\begin{equation}\label{a.s.two}
\liminf _{n\to\infty}\frac{  \max_{1\le i< j\le p_{n}} \sum_{k=1}^{n}\hat{\eta}_{ijk}}{\sqrt{ n \log_{}{p_{n}}}} \ge 2  \quad  \text{a.s.} 
\end{equation}
and 
\begin{equation}\label{a.s.three}
\lim_{n\to\infty}\frac{  \max_{1\le i< j\le p_{n}} \left \vert \sum_{k=1}^{n}\left(\eta_{ijk}- \hat{\eta}_{ijk}\right) \right \vert}{\sqrt{ n \log_{}{p_{n}}}}=0 \quad  \text{a.s.}
\end{equation}
We will complete the proof with three steps.

\emph{Step} 1: \emph{proof of} (\ref{a.s.one}). Given $\varepsilon  \in (0,1)$, set $m_{\tau,\varepsilon}=2/(\tau  \varepsilon)$. For any integer $m >m_{\tau,\varepsilon}  $, we have

\begin{equation}\label{fenjie}
\begin{aligned}
\max_{n^{m}< l\le (n+1)^{m}} \max_{1\le i<j\le p_{l}} \sum_{k=1}^{l} \hat{\eta}_{ijk} 
\le &
\max_{1\le i<j\le p_{(n+1)^{m}}} \max_{n^{m}< l\le (n+1)^{m}}\sum_{k=1}^{l} \hat{\eta}_{ijk}\\
\le &
\max_{1\le i<j \le p_{(n+1)^{m}}} \sum_{k=1}^{n^{m}} \hat{\eta}_{ijk}+\Psi _{n^{m}}, 
\end{aligned}
\end{equation}
where 
\begin{equation}\label{psi}
\begin{aligned}
\Psi _{n^{m}}&=\max_{1\le i<j\le p_{(n+1)^{m}}} \max_{n^{m}< l\le (n+1)^{m}} \left \vert \sum_{k=1}^{l}\hat{ \eta}_{ijk} -\sum_{k=1}^{n^{m}} \hat{\eta}_{ijk} \right \vert \\
&=\max_{1\le i<j\le p_{(n+1)^{m}}} \max_{1 \le h < (n+1)^{m}-n^{m}} \left \vert \sum_{k=1}^{h}   \hat{\eta}_{ijk} \right \vert. 
\end{aligned}
\end{equation}

Set $c_{n1}=\sqrt{\left( 4+\varepsilon \right)  \log_{}{p_{n^{m}}}}$ and $\delta_{n1}=1/(\log_{}{p_{n^{m}}})$. Then, we have
\begin{equation}
\begin{aligned}
P\left( \max_{1\le i<j \le p_{(n+1)^{m}}} \sum_{k=1}^{n^{m}} \hat{\eta}_{ijk} > \sqrt{\left( 4+\varepsilon \right) n^{m} \log_{}{p_{n^{m}}}} \right) 
\le  p_{(n+1)^{m}}^{2} \cdot P\left( \frac{1}{\sqrt{n^{m}}} \sum_{k=1}^{n^{m}}\hat{\eta}_{12k}    > c_{n1} \right). \nonumber
\end{aligned}
\end{equation}
Note that $\vert \hat{\eta}_{ijk} \vert \le n^{s}$.  By Theorem 1.1 from Za\u{i}tsev \cite{Z87}, there exists a random variable $\xi \sim N(0, \text{Var} ( \hat{\eta}_{ijk}  ) )$ such that
\begin{equation}\label{inf}
P\left( \frac{1}{\sqrt{n^{m}}} \sum_{k=1}^{n^{m}}\hat{\eta}_{12k}    > c_{n1} \right)\ge P\left(\xi >c_{n1}+\delta_{n1} \right)- C\text{exp}\left( -C\frac{\sqrt{n^{m}}\delta_{n1}}{n^{sm}} \right),
\end{equation}
\begin{equation}\label{sup}
P\left( \frac{1}{\sqrt{n^{m}}} \sum_{k=1}^{n^{m}}\hat{\eta}_{12k}    > c_{n1} \right)\le P\left(\xi >c_{n1}-\delta_{n1} \right)+ C\text{exp}\left( -C\frac{\sqrt{n^{m}}\delta_{n1}}{n^{sm}} \right).
\end{equation}
For the exponential term, we have
\begin{equation}
p_{(n+1)^{m}}^{2} \cdot \text{exp}\left( -C\frac{\sqrt{n^{m}}\delta_{n1}}{n^{sm}} \right)\le \text{exp}\left( 2m\tau C \log_{}{(n+1)}-\frac{Cn^{(m-2sm)/2}}{\log_{}{p_{n^{m}}}} \right)
\end{equation}
as $n \to \infty$. By the formula that $ \lim_{x\to \infty} P\left ( N\left ( 0,1 \right )\ge x  \right ) =\frac{1}{\sqrt{2\pi }x }e^{-x^{2}/2 } $, we get
\begin{equation}\label{pm}
\begin{aligned}
&~ p_{(n+1)^{m}}^{2} \cdot P\left(\xi >c_{n1}\pm \delta_{n1} \right)\\
\le &~ p_{(n+1)^{m}}^{2} \cdot P\left(\frac{\xi}{\sqrt{\text{Var} ( \hat{\eta}_{ijk}  )}} >c_{n1}\pm \delta_{n1} \right)\\
\sim &~ p_{(n+1)^{m}}^{2} \cdot \left[ 1- \Phi \left(c_{n1}\pm \delta_{n1}\right)  \right]\\
\sim &~\frac{p_{(n+1)^{m}}^{2}}{\sqrt{2\pi}\left( c_{n1}\pm \delta_{n1} \right)}e^{-\frac{\left( c_{n1}\pm \delta_{n1} \right)^{2}}{2}}\\
\sim &~\frac{p_{(n+1)^{m}}^{2}}{\sqrt{2\pi} c_{n1}}e^{-\frac{ c_{n1}^{2}}{2}} = o\left(\frac{1}{n^{m\tau \varepsilon/2}}\right)
\end{aligned}
\end{equation}
as $n \to \infty$. We use the fact that $\text{Var} ( \hat{\eta}_{ijk}  )\le \text{Var} ( \eta_{ijk}  )=1$ in the above inequality. By (\ref{inf})--(\ref{pm}),  we can obtain
\begin{equation}
\sum_{n=1}^{\infty} P\left( \max_{1\le i<j \le p_{(n+1)^{m}}} \sum_{k=1}^{n^{m}} \hat{\eta}_{ijk} > \sqrt{\left( 4+\varepsilon \right) n^{m} \log_{}{p_{n^{m}}}} \right) < \infty \nonumber
\end{equation}
as $n \to \infty$ since $m > m_{\varepsilon} $. By the Borel--Cantelli lemma, we arrive at 
\begin{equation}\label{a.s.1}
\limsup _{n\to\infty}\frac{\max_{1\le i<j \le p_{(n+1)^{m}}} \sum_{k=1}^{n^{m}} \hat{\eta}_{ijk}}{\sqrt{n^{m} \log_{}{p_{n^{m}}}}} \le  \sqrt{4+\varepsilon } \quad  \text{a.s.}
\end{equation}

Trivially, $\frac{ \varepsilon }{2}  \sqrt{ n^{m} \log_{}{p_{n^{m}}}} >  \sqrt{\left( 4+\varepsilon \right)((n+1)^{m}-n^{m}) \log_{}{p_{n^{m}}}}$ due to $0 < c_{1} \le p/ n^{\tau} \le c_{2} <\infty $  for any $ \tau  >0$ as $n$ is sufficiently large. By Markov inequality, we have
\begin{equation}
\begin{aligned}
&\min_{1 \le h < (n+1)^{m}-n^{m}} P\left(  \left \vert  \sum_{k=1}^{(n+1)^{m}-n^{m}} \hat{\eta}_{12k}- \sum_{k=1}^{h} \hat{\eta}_{12k} \right \vert \le \frac{ \varepsilon }{2}   \sqrt{n^{m} \log_{}{p_{n^{m}}}} \right) \\
=& 1-\max_{1 \le h < (n+1)^{m}-n^{m}} P\left(  \left \vert  \sum_{k=1}^{(n+1)^{m}-n^{m}} \hat{\eta}_{12k}- \sum_{k=1}^{h} \hat{\eta}_{12k} \right \vert >\frac{ \varepsilon }{2}  \sqrt{n^{m} \log_{}{p_{n^{m}}}} \right)\\
\ge & 1- \max_{1 \le h < (n+1)^{m}-n^{m}}\frac{C\left( (n+1)^{m}-n^{m}-h \right)}{\varepsilon^{2}n^{m} \log_{}{p_{n^{m}}}}\\
\ge &1- \frac{C\left( (n+1)^{m}-n^{m} \right)}{\varepsilon^{2}n^{m} \log_{}{p_{n^{m}}}} \ge 1/2 \nonumber
\end{aligned}
\end{equation}
as $n\to \infty$.
Then, by (\ref{psi}), Ottaviani's inequality and the same argument as in (\ref{inf})--(\ref{pm}), we obtain
\begin{align}
&~P\left( \Psi _{n^{m}} > \varepsilon  \sqrt{n^{m} \log_{}{p_{n^{m}}}} \right) \nonumber\\
\le  &~ p_{(n+1)^{m}}^{2} \cdot P\left( \max_{1 \le h < (n+1)^{m}-n^{m}} \left \vert \sum_{k=1}^{h} \hat{\eta}_{12k} \right \vert >\varepsilon  \sqrt{n^{m} \log_{}{p_{n^{m}}}} \right) \nonumber\\
\le &~ p_{(n+1)^{m}}^{2} \cdot P\left( \left \vert \sum_{k=1}^{(n+1)^{m}-n^{m}} \hat{\eta}_{12k} \right \vert > \frac{ \varepsilon }{2}  \sqrt{n^{m} \log_{}{p_{n^{m}}}}\right) \nonumber\\
\le &~ p_{(n+1)^{m}}^{2} \cdot P\left( \left \vert \sum_{k=1}^{(n+1)^{m}-n^{m}} \hat{\eta}_{12k} \right \vert > \sqrt{\left( 4+\varepsilon \right)((n+1)^{m}-n^{m}) \log_{}{p_{n^{m}}}} \right) \nonumber\\
= &~ p_{(n+1)^{m}}^{2} \cdot  P\left( \left \vert  \sum_{k=1}^{(n+1)^{m}-n^{m}} \frac{\hat{\eta}_{12k}}{\sqrt{(n+1)^{m} -n^{m}}} \right \vert > \sqrt{\left( 4+\varepsilon \right) \log_{}{p_{n^{m}}}} \right) \nonumber  \\
\lesssim &~C\cdot p_{(n+1)^{m}}^{2} \cdot \left[ 1-\Phi \left(  \sqrt{\left( 4+\varepsilon \right) \log_{}{p_{n^{m}}}} \right) \right]  \nonumber \\
\sim &~ \frac{C\cdot p_{(n+1)^{m}}^{2}}{ \sqrt{2\pi \left( 4+\varepsilon \right) \log_{}{p_{n^{m}}}}}  e^{-\frac{\left( 4+\varepsilon \right)\log_{}{p_{n^{m}}}}{2}}  \nonumber \\
=&~ o\left(\frac{1}{n^{m\tau \varepsilon/2}}\right) \nonumber
\end{align}
for sufficient large $n$. Since $m >m_{\varepsilon} $, we see
\begin{equation}
\sum_{n=1}^{\infty} P\left( \Psi _{n^{m}} > \varepsilon  \sqrt{n^{m} \log_{}{p_{n^{m}}}} \right) < \infty \nonumber
\end{equation}
as $n \rightarrow \infty$. Therefore, by the Borel--Cantelli lemma again,
\begin{equation}\label{a.s.2}
\limsup _{n\to\infty}\frac{\Psi _{n^{m}}}{\sqrt{n^{m} \log_{}{p_{n^{m}}}}} \le \varepsilon  \quad  \text{a.s.}
\end{equation}
Review (\ref{fenjie})--(\ref{psi}) and (\ref{a.s.1})--(\ref{a.s.2}). We obtain that
\begin{equation}
\begin{aligned}
\limsup _{n\to\infty}\frac{ \max_{1\le i<j\le p_{n}} \sum_{k=1}^{n} \hat{\eta}_{ijk}}{\sqrt{n \log_{}{p_{n}}}} & \le \limsup _{n\to\infty}\frac{\max_{n^{m}< l\le (n+1)^{m}} \max_{1\le i<j\le p_{l}} \sum_{k=1}^{l} \hat{\eta}_{ijk}}{\sqrt{n^{m} \log_{}{p_{n^{m}}}}}\\
&\le \sqrt{4+\varepsilon}+\varepsilon  \quad  \text{a.s.} \nonumber
\end{aligned}
\end{equation}
Choosing $\varepsilon >0$ small enough, we then get (\ref{a.s.one}).
 
\emph{Step} 2: \emph{proof of} (\ref{a.s.two}). Given $ \varepsilon \in (0,1)$, set $c_{n2}= \sqrt {\left( 4-\varepsilon \right)  \log_{}{p_{n}}}$.  Set
\begin{equation}
I=\left \{ (i,j);1\le i<j\le p_{n} \right \}. \nonumber
 \end{equation}
For $ \alpha=(i,j)  \in I$, define 
\begin{equation}
X_{\alpha}= \frac{1}{\sqrt{n}} \sum_{k=1}^{n} \hat{ \eta}_{ijk} \nonumber 
\end{equation}
and
\begin{equation}
B_{\alpha }=\left \{ \left ( k,l \right )\in I; \ \left \{ k,l \right \} \cap  \left \{ i,j \right \}\ne  \emptyset , \text{but} \left ( k,l \right )\ne \alpha    \right \}.  \nonumber
\end{equation}
Note that  $ X_{\alpha } $  is independent of $ \left \{ X_{\beta };\beta \notin B_{\alpha } \right \}  $. By Lemma \ref{L1}, we have 
\begin{equation}\label{CSP}
 P\left ( \max_{\alpha \in I}X_{\alpha } \le c_{n2}  \right ) \le e^{-\lambda_{1}  }  +  b_{1}+b_{2},  
\end{equation}
where
\begin{equation}
\begin{aligned}
\lambda_{1}  =\sum_{\alpha \in I}^{}P\left ( X_{\alpha } >  c_{n2} \right ) =
                     \frac{ p_{n}\left ( p_{n}-1 \right )  }{2} P\left ( \frac{1}{\sqrt{n}} \sum_{k=1}^{n} \hat{\eta}_{12k}> c_{n2}   \right ), \nonumber 
\end{aligned}
\end{equation}
\begin{equation}
\begin{aligned}
b_{1} =\sum_{\alpha \in I}^{} \sum_{\beta \in B_{\alpha } }^{} P\left ( X_{\alpha }>  c_{n2}   \right )P\left ( X_{\beta }>  c_{n2}  \right )
         \le  \frac{ p_{n}\left ( p_{n}-1 \right )  }{2} \cdot \left ( 2p_{n} \right )\cdot P\left ( \frac{1}{\sqrt{n}} \sum_{k=1}^{n} \hat{\eta}_{12k} >   c_{n2}   \right )^{2}  
\nonumber
\end{aligned}
\end{equation}
and
\begin{equation}
\begin{aligned}
b_{2}  &=\sum_{\alpha \in I}^{} \sum_{\beta \in B_{\alpha } }^{} P\left ( X_{\alpha }>   c_{n2} , X_{\beta }> c_{n2}   \right)\\ 
	        & \le \frac{ p_{n}\left ( p_{n}-1 \right )  }{2} \cdot \left ( 2p_{n} \right ) \cdot P\left ( \frac{1}{\sqrt{n}} \sum_{k=1}^{n}  \hat{\eta}_{12k} >  c_{n2} ,\frac{1}{\sqrt{n}} \sum_{k=1}^{n}  \hat{\eta}_{13k} > c_{n2}  \right ).  \nonumber
\end{aligned} 
\end{equation}

We first calculate $\lambda_{1}$. By the same argument as in (\ref{inf})--(\ref{pm}), we have
\begin{equation}\label{lambda}
\begin{aligned}
\lambda_{1}=&~\frac{ p_{n}\left ( p_{n}-1 \right )  }{2} P\left ( \frac{1}{\sqrt{n}} \sum_{k=1}^{n} \hat{\eta}_{12k}> c_{n2}   \right )\\
\sim & ~ \frac{p_{n}\left ( p_{n}-1 \right )}{2 \sqrt{2\pi }c_{n2}} e^{-\frac{ c_{n2}^{2}}{2}} \\
=  &~ o \left( n^{\tau \varepsilon /2} \right)
\end{aligned}
\end{equation}
as $n \to \infty$. Then, it follows from (\ref{lambda}) that
\begin{equation}
\begin{aligned}
b_{1} \le &~ p_{n}^{3} \cdot P\left ( \frac{1}{\sqrt{n}} \sum_{k=1}^{n} \hat{\eta}_{12k}> c_{n2}    \right )^{2}\\
\sim&~ \frac{p_{n}^{3}}{2  \pi c_{n2}^{2}} e^{-c_{n2}^{2}} \\
=&~ o\left( \frac{1}{n^{\tau(1-\varepsilon)} } \right)
\end{aligned}
\end{equation}
as $n \to \infty$. 

Finally, we estimate $b_{2}$. Set $\delta_{n2}=1/\sqrt{\log_{}{p_{n}}}$. By Theorem 1.1 from Za\u{i}tsev \cite{Z87}, there exist two normal random variables $\xi_{1}$ and $\xi_{2}$ such that
\begin{equation}
\begin{aligned}
b_{2} \le &~ p_{n}^{3} \cdot P\left ( \min \left \{\frac{1}{\sqrt{n}} \sum_{k=1}^{n}  \hat{\eta}_{12k}, \frac{1}{\sqrt{n}} \sum_{k=1}^{n}  \hat{\eta}_{13k} \right \} > c_{n2}  \right )\\
\le &~ p_{n}^{3} \cdot \left[ P\left ( \min \left \{ \xi_{1}, \xi_{2} \right \} > c_{n2} - \delta_{n2}\right ) +C\text{exp}\left( -C\frac{\sqrt{n}\delta_{n2}}{n^{s}} \right) \right],
\end{aligned}
\end{equation}
where $\{ \xi_{1}, \xi_{2} \} \sim N( 0, \text{Var}(\hat{\eta}_{12k}))$ and $\text{Cov} (\xi_{1}, \xi_{2}) =\text{Cov}(\hat{\eta}_{12k}, \hat{\eta}_{13k}):= \rho_{n}$. For the exponential term, 
\begin{equation}
p_{n}^{3} \cdot \text{exp}\left( -C\frac{\sqrt{n}\delta_{n2}}{n^{s}} \right) \le \text{exp}\left( 3\tau C\log_{}{n}-C n^{\frac{1}{2}-s} (\log_{}{p_{n}})^{-\frac{1}{2}} \right) 
\end{equation}
as $n$ is sufficiently large.
Since $\text{Var}(\xi_{1}) =\text{Var}(\xi_{2})=\text{Var}(\hat{\eta}_{12k})\le \text{Var}(\eta_{12k})=1$, we have
\begin{equation}\label{21}
\begin{aligned}
&~p_{n}^{3} \cdot P\left ( \min \left \{ \xi_{1}, \xi_{2} \right \} > c_{n2} - \delta_{n2}\right ) \\
\le &~ p_{n}^{3} \cdot P\left (  \xi_{1}+ \xi_{2}  >2(c_{n2} - \delta_{n2})  \right )\\
\le &~ p_{n}^{3} \cdot P\left ( \frac{ \xi_{1}+ \xi_{2} }{\sqrt{2\text{Var}(\hat{\eta}_{12k})+2\rho_{n} }} > \frac{2(c_{n2} - \delta_{n2}) }{\sqrt{2+2\rho_{n} }}  \right )\\
\sim &~ \frac{\sqrt{2+2\rho_{n}} \cdot p_{n}^{3} }{2\sqrt{2\pi}c_{n2}} e^{-\frac{c_{n2}^{2}}{1+\rho_{n}}}\\
=&~ o\left( n^{\left(3-\frac{4-\varepsilon}{1+\rho_{n}} \right) \tau} \right)
\end{aligned}
\end{equation}
as $n \to \infty$. By some calculations, we have
\begin{equation}
\begin{aligned}
\rho_{n}&=\text{Cov}\left( \hat{\eta}_{12k}, \hat{\eta}_{13k} \right)\\
&=\text{Cov}\left( \eta_{12k}, \eta_{13k} \right)-E \left(\eta_{12k} \eta_{13k}I _{\{ \max \{\vert \eta_{12k}\vert, \vert \eta_{13k}\vert \}  >n^{s}\} } \right) -\left[ E \left(\eta_{12k} I _{\{  \vert \eta_{12k}\vert >n^{s}\} } \right)\right]^{2}\\
&\to \text{Cov}\left( \eta_{12k}, \eta_{13k} \right) =\text{Corr}  (\vert X_{1,1}-X_{2,1}\vert^{2}, \vert X_{1,1}-X_{3,1}\vert^{2}) \nonumber
\end{aligned}
\end{equation}
as $n \to \infty$. Therefore, if $\text{Corr}  (\vert X_{1,1}-X_{2,1}\vert^{2}, \vert X_{1,1}-X_{3,1}\vert^{2})<1/3$, then $3-\frac{4-\varepsilon}{1+\rho_{n}} <0$ as $n \to \infty$. Next, from (\ref{CSP})--(\ref{21}), we obtain that
\begin{equation}
P\left ( \max_{1\le i <j \le p_{n}}\frac{1}{\sqrt{n}} \sum_{k=1}^{n}  \hat{\eta}_{ijk} \le  c_{n2}  \right )=P\left ( \max_{\alpha \in I}X_{\alpha } \le c_{n2}   \right ) \le o \left( n^{-\varepsilon^{\prime}} \right) \nonumber
\end{equation}
as $n$ is sufficiently large, where $\varepsilon^{\prime}>0$ depends on $\varepsilon$ and $\text{Corr}  (\vert X_{1,1}-X_{2,1}\vert^{2}, \vert X_{1,1}-X_{3,1}\vert^{2})$. Take an integer $m>1/\varepsilon^{\prime}$ such that
\begin{equation}
\sum_{n=1}^{\infty} P\left( \max_{1\le i<j \le p_{n^{m}}}\frac{1}{\sqrt{n^{m}}} \sum_{k=1}^{n^{m}} \hat{\eta}_{ijk} \le  \sqrt{\left( 4-\varepsilon \right) \log_{}{p_{n^{m}}}} \right) \le \sum_{n=1}^{\infty} o \left( n^{-m\varepsilon^{\prime}} \right) < \infty \nonumber
\end{equation}
as $n \to \infty$. Then, by the Borel--Cantelli lemma, 
\begin{equation}\label{a.s.3}
\liminf _{n\to\infty}\frac{ \max_{1\le i<j\le p_{n^{m}}} \sum_{k=1}^{n^{m}} \eta_{ijk}}{\sqrt{n^{m} \log_{}{p_{n^{m}}}}} > \sqrt{4-\varepsilon } \quad  \text{a.s.} 
\end{equation}
Recalling (\ref{psi}), we find
\begin{equation}
\inf_{n^{m}< l\le (n+1)^{m}} \max_{1\le i<j\le p_{l}} \sum_{k=1}^{l} \hat{\eta}_{ijk} \ge \max_{1\le i<j \le p_{n^{m}}} \sum_{k=1}^{n^{m}} \hat{\eta}_{ijk} - \Psi _{n^{m}}. \nonumber
\end{equation}
Combining the above inequality, (\ref{a.s.2}) and (\ref{a.s.3}), we obtain that
\begin{equation}
\begin{aligned}
&\liminf _{n\to\infty} \frac{  \max_{1\le i<j\le p_{n}} \sum_{k=1}^{n} \eta_{ijk}}{\sqrt{n \log_{}{p_{n}}}}\\
\ge & \liminf _{n\to\infty} \frac{ \inf_{n^{m}< l\le (n+1)^{m}} \max_{1\le i<j\le p_{l}} \sum_{k=1}^{l} \eta_{ijk}}{\sqrt{n^{m} \log_{}{p_{n^{m}}}}}\\
\ge & \sqrt{4-\varepsilon }-\varepsilon \quad  \text{a.s.} \nonumber
\end{aligned}
\end{equation}
Then, (\ref{a.s.two}) follows from the arbitrariness of $\varepsilon$.

\emph{Step} 3: \emph{proof of} (\ref{a.s.three}). Reviewing (\ref{hat}), we have
\begin{equation}
E\left \vert \eta_{12k}-\hat{\eta}_{12k} \right \vert ^{4\tau +4 +\epsilon } \lesssim E\left [ \vert \eta_{12k} \vert ^{4\tau +4 +\epsilon } I_{ \{ \vert \eta_{12k} \vert \le n^{s}  \} } \right] <\infty \nonumber
\end{equation} 
and
\begin{equation}
\begin{aligned}
\text{Var} \left(  \eta_{12k}-\hat{\eta}_{12k} \right)&=\text{Var} \left(  \eta_{12k}I_{ \{ \vert \eta_{12k} \vert > n^{s}  \} } \right)\\
& \le E\left(  \eta_{12k}^{2}I_{ \{ \vert \eta_{12k} \vert > n^{s}  \} } \right)\\
& \le \frac{E\left( \vert \eta_{12k} \vert^{4\tau +4 +\epsilon}I_{ \{ \vert \eta_{12k} \vert > n^{s}  \} } \right)}{n^{s(4\tau +2 +\epsilon )}} \\
&\le C n^{-s(4\tau +2 +\epsilon )} \nonumber
\end{aligned}
\end{equation} 
as $n\to \infty$ due to $E \vert X_{1,1} \vert ^{8\tau+8+\epsilon } < \infty$  for some $ \epsilon  >0$. Then, we see from Fuk--Nagaev inequality that, for $\varepsilon>0$,
\begin{equation}
\begin{aligned}
&\sum_{n=1}^{\infty}P\left( \max_{1\le i< j\le p_{n}} \left \vert \sum_{k=1}^{n}\left(\eta_{ijk}- \hat{\eta}_{ijk}\right) \right \vert > \varepsilon \sqrt{n \log p_{n}} \right)\\
 \le &\sum_{n=1}^{\infty} p_{n}^{2} \cdot P\left( \left \vert \sum_{k=1}^{n}\left(\eta_{12k}- \hat{\eta}_{12k}\right) \right \vert > \varepsilon \sqrt{n \log p_{n}} \right)\\
\lesssim &\sum_{n=1}^{\infty}\frac{n p_{n}^{2} \cdot E\left \vert \eta_{12k}-\hat{\eta}_{12k} \right \vert ^{4\tau +4 +\epsilon }}{\left(\varepsilon \sqrt{n \log p_{n}} \right)^{4\tau +4 +\epsilon}}+\sum_{n=1}^{\infty} p_{n}^{2} \cdot \text{exp} \left( -C\frac{\varepsilon^{2} \log p_{n}}{\text{Var} \left(  \eta_{12k}-\hat{\eta}_{12k} \right)} \right)\\
\lesssim &\sum_{n=1}^{\infty} \frac{1}{n^{1+\epsilon/2}}+\sum_{n=1}^{\infty} \text{exp} \left(2\tau C\log n -Cn^{s(4\tau +2 +\epsilon )} \log p_{n} \right) < \infty \nonumber
\end{aligned}
\end{equation}
as $n \to \infty$ under the assumption that $0 < c_{1} \le p/ n^{\tau} \le c_{2} <\infty $  for any $ \tau  >0$. By the Borel--Cantelli lemma, we get
\begin{equation}
\lim_{n\to\infty}\frac{  \max_{1\le i< j\le p_{n}} \left \vert \sum_{k=1}^{n}\left(\eta_{ijk}- \hat{\eta}_{ijk}\right) \right \vert}{\sqrt{ n \log_{}{p_{n}}}}\le \varepsilon \quad  \text{a.s.} \nonumber
\end{equation}
By choosing $\varepsilon>0$ small enough, the proof of (\ref{a.s.three}) is completed.
\end{proof}

\subsection{Proof of Theorem \ref{thm2.4}}
\begin{proof}
We continue to use the notations in the proof of Theorem \ref{thm2.3}. Note that 
\begin{equation}
\begin{aligned}
Ee^{t_{0}\vert \eta_{ijk} \vert ^{\alpha}}\le &~E~\text{exp} \left( Ct_{0} \left \vert X_{i,k}^{2}+X_{j,k}^{2} +\sigma^{2} \right \vert ^{\alpha} \right)\\
\le & ~E~\text{exp} \left( Ct_{0}  \vert X_{i,k} \vert ^{2\alpha}+Ct_{0}  \vert X_{j,k} \vert ^{2\alpha} +Ct_{0} \sigma^{2\alpha}   \right)\\
=&~ C\cdot E~\text{exp} \left(2 Ct_{0}  \vert X_{i,k} \vert ^{2\alpha} \right) <\infty \nonumber
\end{aligned}
\end{equation}
due to $Ee^{t_{0}\vert X_{1,1} \vert ^{2\alpha}}<\infty$ for some $0<\alpha \le1/2$ and $ t_{0}>0$. Then, by Lemma \ref{L2}, we have
\begin{equation}\label{24}
\begin{aligned}
&\sum_{n=1}^{\infty} P\left( \max_{1\le i< j \le p_{n}} \sum_{k=1}^{n} \eta_{ijk} >\sqrt{(4+\varepsilon )n \log_{}{p_{n}}} \right)\\
 \le& \sum_{n=1}^{\infty} p_{n}^{2}\cdot P\left( \frac{1}{\sqrt{n}} \sum_{k=1}^{n} \eta_{12k} >\sqrt{(4+\varepsilon ) \log_{}{p_{n}}} \right)\\
\sim & \sum_{n=1}^{\infty} \frac{p_{n}^{2}}{ \sqrt{(4+\varepsilon ) \log_{}{p_{n}}}} e^{-\frac{(4+\varepsilon ) \log_{}{p_{n}}}{2}}\\
\le & \sum_{n=1}^{\infty} \frac{1}{p_{n}^{\varepsilon/2}}
= \sum_{n=1}^{\infty} o\left( \frac{1}{e^{\frac{\varepsilon}{2} n^{\frac{\alpha}{2-\alpha}} }} \right)< \infty
\end{aligned}
\end{equation}
as $n \to \infty$ since $\log_{}{p}=o( n^{\frac{\alpha}{2-\alpha}} )$. Next, by the Borel--Cantelli lemma, one has
\begin{equation}\label{a.s.sup}
\limsup _{n\to\infty}\frac{  \max_{1\le i< j\le p_{n}} \sum_{k=1}^{n}\eta_{ijk}}{\sqrt{ n\log_{}{p_{n}}}} \le \sqrt{4+\varepsilon}  \quad  \text{a.s.}
\end{equation}

Review the proof of (\ref{a.s.two}), the definitions of $c_{n2}$, $I$ and $B_{\alpha}$. For each $\alpha=(i,j) \in I$, set $Z_{\alpha}=\frac{1}{\sqrt{n}} \sum_{k=1}^{n} \eta_{ijk}$.  By Lemma \ref{L1}, we have
\begin{equation}\label{chenstein}
 P\left ( \max_{\alpha \in I}Z_{\alpha } \le  c_{n2}   \right )\le e^{-\lambda_{2}}  + u_{1}+u_{2}, \nonumber
\end{equation}
where
\begin{equation}
\begin{aligned}
\lambda_{2} &=\frac{ p_{n}\left ( p_{n}-1 \right )  }{2} P\left ( \frac{1}{\sqrt{n}} \sum_{k=1}^{n} \eta_{12k}>  c_{n2}   \right ),\\
u_{1} &\le p_{n}^{3}\cdot P\left ( \frac{1}{\sqrt{n}} \sum_{k=1}^{n} \eta_{12k} >  c_{n2}   \right )^{2},\\
u_{2}  & \le p_{n}^{3} \cdot P\left ( \frac{1}{\sqrt{n}} \sum_{k=1}^{n} \eta_{12k}>  c_{n2} ,\frac{1}{\sqrt{n}} \sum_{k=1}^{n} \eta_{13k} > c_{n2}  \right ).  \nonumber 
\end{aligned}
\end{equation}
By Lemma \ref{L2} and the same argument as in (\ref{24}), we have
\begin{equation}
\lambda_{2} \sim \frac{p_{n}\left ( p_{n}-1 \right )}{ 2\sqrt{2\pi}c_{n2}} e^{-\frac{c_{n2}^{2}}{2}} = o \left( p_{n}^{\frac{\varepsilon}{2}} \right)= o\left( e^{\frac{\varepsilon}{2} n^{\frac{\alpha}{2-\alpha}} } \right) \nonumber
\end{equation}
as $n \to \infty$. Furthermore, one can get that
\begin{equation}
u_{1} \le \frac{p_{n}^{3}}{2\pi c_{n2}^{2}} e^{-c_{n2}^{2}} \le \frac{1}{p_{n}^{1-\varepsilon}} = o\left( \frac{1}{e^{(1-\varepsilon) n^{\frac{\alpha}{2-\alpha}} }} \right) \nonumber
\end{equation}
for sufficiently large $n$. Write
\begin{equation}
\eta_{12k}+\eta_{13k}=\frac{\vert X_{1,1}-X_{2,1}\vert^{2}+\vert X_{1,1}-X_{3,1}\vert^{2}-4E\vert X_{1,1}\vert ^{2}}{\sqrt{2(E\vert X_{1,1}\vert^{4}+(E\vert X_{1,1}\vert^{2})^{2})}}. \nonumber
\end{equation}
By some calculations, we see
\begin{equation}
\begin{aligned}
E\left( \eta_{12k}+\eta_{13k} \right)&=0,\\ 
\text{Var} \left( \eta_{12k}+\eta_{13k} \right)&=2+2\text{Corr} (\vert X_{1,1}-X_{2,1}\vert^{2}, \vert X_{1,1}-X_{3,1}\vert^{2}):=\tilde{\sigma}^{2} \nonumber
\end{aligned}
\end{equation}
for each $k$. Moreover, we know $Ee^{t_{0}\vert \eta_{12k}+\eta_{13k} \vert ^{\alpha}} <\infty$ for some $0<\alpha \le 1/2$ and $t_{0}>0$. By Lemma \ref{L2}, we then have that
\begin{equation}
\begin{aligned}
u_{2}\le &~ p_{n}^{3} \cdot P\left( \frac{1}{\sqrt{n}} \sum_{k=1}^{n} \frac{\eta_{12k}+\eta_{13k}}{\tilde{\sigma}}>\frac{2c_{n2}}{\tilde{\sigma}} \right)\\
\sim &~ \frac{\tilde{\sigma} \cdot p_{n}^{3}}{2\sqrt{2\pi}c_{n2}}e^{-\frac{2(4-\varepsilon ) \log_{}{p_{n}}}{\tilde{\sigma}^{2}} } \\
\le &~ p_{n}^{3-\frac{2(4-\varepsilon )}{\tilde{\sigma}^{2}}} \nonumber
\end{aligned}
\end{equation}
as $ n \to \infty$. If $\text{Corr}(\vert X_{1,1}-X_{2,1}\vert^{2}, \vert X_{1,1}-X_{3,1}\vert^{2})<1/3$, then $3-\frac{2(4-\varepsilon )}{\tilde{\sigma}^{2}}<0$ and
\begin{equation}
\sum_{n=1}^{\infty} P\left ( \max_{\alpha \in I}Z_{\alpha } \le  c_{n2}  \right )\le \sum_{n=1}^{\infty} \frac{1}{p_{n}^{\varepsilon^{\prime\prime} }} = \sum_{n=1}^{\infty} o\left( \frac{1}{e^{\varepsilon^{\prime\prime}  n^{\frac{\alpha}{2-\alpha}} }} \right) < \infty \nonumber
\end{equation}
as $ n \to \infty$ for some constant $\varepsilon^{\prime\prime} >0$ depending on $\varepsilon$ and $\text{Corr}(\vert X_{1,1}-X_{2,1}\vert^{2}, \vert X_{1,1}-X_{3,1}\vert^{2})$. Therefore, we see from the Borel--Cantelli lemma that 
\begin{equation}
\liminf _{n\to\infty}\frac{  \max_{1\le i< j\le p_{n}} \sum_{k=1}^{n}\eta_{ijk}}{\sqrt{ n\log_{}{p_{n}}}} > \sqrt{4-\varepsilon}  \quad  \text{a.s.} \nonumber
\end{equation}
Combining the above inequality, (\ref{etam}) and (\ref{a.s.sup}), the desired conclusion follows from the arbitrariness of $\varepsilon$.
\end{proof}

\section{Applications}\label{sec5}
\subsection{$l^{q}$-norm}
Instead of studying the Euclidean norm distance, we will consider a more general distance, that is, $l^{q}$-norm distance in $\mathbb{R}^{n}$. For a $n$-dimensional random vector $\boldsymbol{X}$, define the $l^{q}$-norm of $\boldsymbol{X}$ by
\begin{equation}
\left \| \boldsymbol{X} \right \| _{q}=\left( \sum_{k=1}^{n} \vert X_{k} \vert^{q} \right)^{1/q}, \nonumber
\end{equation}
where $q\ge 1$. And the maximum interpoint $l^{q}$-norm distance is denoted by
\begin{equation}
M_{n,q}=\max_{1\le i< j\le p} \left \| \boldsymbol{X}_{i}- \boldsymbol{X}_{j} \right \|_{q}=\max_{1\le i< j\le p} \left( \sum_{k=1}^{n}\vert X_{i,k}-X_{j,k} \vert^{q}\right)^{1/q}. \nonumber 
\end{equation}
Similarly to Theorem \ref{thm2.3}, one can deduce the law of the logarithm for $M_{n,q}$ as follows.

\begin{theorem}\label{thm4.1}
Assume
\begin{flalign}
\quad &(i) \  0 < c_{1} \le p/ n^{\tau} \le c_{2} <\infty \ \text{for} \text{ any} \ \tau  >0; \nonumber &\\ 
\quad &(ii) \ E \vert X_{1,1} \vert ^{q(4\tau+4+\epsilon) } < \infty  \ \text{for} \ \text{some} \ \epsilon  >0; \nonumber &\\
\quad &(iii) \ \mathrm{Corr} (\vert X_{1,1}-X_{2,1}\vert^{q}, \vert X_{1,1}-X_{3,1}\vert^{q})<1/3; \nonumber &
\end{flalign}
where $c_{1}$ and $c_{2}$ are constants not depending on $n$. Then, the following holds as $n \to \infty$:
\begin{equation}\label{lq}
 \frac{  M_{n,q}^{q}-n E\left(\vert X_{1,1}-X_{2,1} \vert^{q}  \right)}{\sqrt{\mathrm{Var} (\vert X_{1,1}-X_{2,1}\vert^{q})n \log_{}{p}}} \to 2  \quad  \text{a.s.} 
\end{equation}
\end{theorem}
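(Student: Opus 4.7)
The plan is to mirror the three-step strategy used for Theorem \ref{thm2.3}, with the squared Euclidean increments $|X_{i,k}-X_{j,k}|^{2}$ replaced throughout by their $q$-th power analogues $|X_{i,k}-X_{j,k}|^{q}$. Define
\begin{equation}
\eta_{ijk} := \frac{|X_{i,k}-X_{j,k}|^{q}-E|X_{1,1}-X_{2,1}|^{q}}{\sqrt{\mathrm{Var}(|X_{1,1}-X_{2,1}|^{q})}}, \nonumber
\end{equation}
so that for each $k$ the variables are centered with unit variance and
\begin{equation}
\max_{1\le i<j\le p_{n}}\sum_{k=1}^{n}\eta_{ijk}=\frac{M_{n,q}^{q}-nE|X_{1,1}-X_{2,1}|^{q}}{\sqrt{\mathrm{Var}(|X_{1,1}-X_{2,1}|^{q})}}. \nonumber
\end{equation}
Truncate at level $n^{s}$ with $0<s<1/2$ exactly as in (\ref{hat}), obtaining bounded centered variables $\hat{\eta}_{ijk}$, and reduce (\ref{lq}) to the three almost sure statements that are the word-for-word analogues of (\ref{a.s.one})--(\ref{a.s.three}).

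For the upper bound I would run the subsequence argument along $\{n^{m}\}$ with $m>2/(\tau\varepsilon)$ and apply Za\u{i}tsev's Gaussian approximation (Theorem 1.1 of \cite{Z87}) to $n^{-m/2}\sum_{k=1}^{n^{m}}\hat{\eta}_{12k}$ at the threshold $c_{n1}=\sqrt{(4+\varepsilon)\log p_{n^{m}}}$; the resulting Gaussian tail multiplied by the $O(n^{2m\tau})$ union cost is summable once $m$ is large enough, and the Ottaviani control of the block fluctuations $\Psi_{n^{m}}$ in (\ref{psi}) goes through unchanged. For the lower bound I would apply Lemma \ref{L1} with $X_{\alpha}=n^{-1/2}\sum_{k}\hat{\eta}_{ijk}$ and the same neighborhoods $B_{\alpha}$ as in Step 2 of Theorem \ref{thm2.3}; the $\lambda$ and $b_{1}$ terms are estimated by Za\u{i}tsev's theorem identically, and the bivariate term $b_{2}$ produces the exponent $3-(4-\varepsilon)/(1+\rho_{n})$ with $\rho_{n}\to\mathrm{Corr}(|X_{1,1}-X_{2,1}|^{q},|X_{1,1}-X_{3,1}|^{q})$, so hypothesis (iii) makes this exponent strictly negative for all sufficiently small $\varepsilon$.

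The truncation error (the analogue of (\ref{a.s.three})) is handled by the Fuk--Nagaev inequality together with hypothesis (ii). The elementary bound $|a-b|^{q}\le 2^{q-1}(|a|^{q}+|b|^{q})$ together with $E|X_{1,1}|^{q(4\tau+4+\epsilon)}<\infty$ gives $E|\eta_{12k}|^{4\tau+4+\epsilon}<\infty$, and the Chebyshev-type bound $\mathrm{Var}(\eta_{12k}-\hat{\eta}_{12k})\lesssim n^{-s(4\tau+2+\epsilon)}$ makes the polynomial plus exponential Fuk--Nagaev tails summable after inflating by the $p_{n}^{2}\lesssim n^{2\tau}$ union cost, exactly as in Step 3 of Theorem \ref{thm2.3}.

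The main obstacle I anticipate is the bookkeeping in the $b_{2}$ estimate, specifically showing that $\rho_{n}=\mathrm{Cov}(\hat{\eta}_{12k},\hat{\eta}_{13k})$ converges to $\mathrm{Corr}(|X_{1,1}-X_{2,1}|^{q},|X_{1,1}-X_{3,1}|^{q})$ as $n\to\infty$. This requires uniform integrability of the tail $\eta_{12k}\eta_{13k}I_{\{\max(|\eta_{12k}|,|\eta_{13k}|)>n^{s}\}}$, which follows from hypothesis (ii) by Cauchy--Schwarz since $q(4\tau+4+\epsilon)\ge 4$ for arbitrary $\tau>0$. Once this convergence is in hand, condition (iii) ensures $3-(4-\varepsilon)/(1+\rho_{n})<0$ for small $\varepsilon$, the Borel--Cantelli argument closes the lower bound, and combining the three steps yields the claimed a.s.\ limit equal to $2$.
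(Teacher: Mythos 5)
Your proposal is correct and follows essentially the same route as the paper, which proves Theorem \ref{thm4.1} simply by redefining $\eta_{ijk}$ with $q$-th powers and rerunning the three-step argument of Theorem \ref{thm2.3} verbatim. Your write-up is in fact more detailed than the paper's one-line reduction, and the points you flag (moment transfer via $|a-b|^{q}\le 2^{q-1}(|a|^{q}+|b|^{q})$ and the convergence of $\rho_{n}$) are exactly the places where the hypotheses (ii) and (iii) enter.
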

\begin{proof}
Reviewing the proof of Theorem \ref{thm2.3}, Theorem \ref{thm4.1} follows from changing $\eta_{ijk}$ to 
\begin{equation}
\eta_{ijk}=\frac{\vert X_{i,k}-X_{j,k} \vert^{q} -E\left(\vert X_{1,1}-X_{2,1} \vert^{q}  \right)}{\sqrt{\mathrm{Var} (\vert X_{1,1}-X_{2,1}\vert^{q})}}. \nonumber
\end{equation}
It is obvious that 
\begin{equation}
\max_{1\le i< j\le p}  \sum_{k=1}^{n}\eta_{ijk}= \frac{  M_{n,q}^{q}-n E\left(\vert X_{1,1}-X_{2,1} \vert^{q}  \right)}{\sqrt{\mathrm{Var} (\vert X_{1,1}-X_{2,1}\vert^{q})}}.  \nonumber
\end{equation}
Then, the proof of Theorem \ref{thm4.1} is completed. 
\end{proof}

By Theorem \ref{thm2.4} and the same method as above, we obtain the following result.

\begin{theorem}\label{thm4.2}
Assume
\begin{flalign}
\quad &(i) \  Ee^{t_{0}\vert X_{1,1} \vert ^{q\alpha}}<\infty \ \text{for some} \ 0<\alpha \le1/2, \ t_{0}>0 ; \nonumber &\\ 
\quad &(ii) \ \log_{}{p}=o\left( n^{\frac{\alpha}{2-\alpha}} \right); \nonumber &\\
\quad &(iii) \ \mathrm{Corr} (\vert X_{1,1}-X_{2,1}\vert^{2}, \vert X_{1,1}-X_{3,1}\vert^{2})<1/3. \nonumber &
\end{flalign}
Then, (\ref{lq}) holds as $ \to \infty$.
\end{theorem}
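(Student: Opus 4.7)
The plan is to adapt the proof of Theorem~\ref{thm2.4} verbatim, with the $\eta_{ijk}$ replaced by its $l^{q}$-norm analogue as already introduced in the proof of Theorem~\ref{thm4.1}, namely
\begin{equation}
\eta_{ijk}=\frac{\vert X_{i,k}-X_{j,k}\vert^{q}-E\vert X_{1,1}-X_{2,1}\vert^{q}}{\sqrt{\mathrm{Var}(\vert X_{1,1}-X_{2,1}\vert^{q})}}, \nonumber
\end{equation}
so that $E\eta_{ijk}=0$, $\mathrm{Var}(\eta_{ijk})=1$, and $\max_{1\le i<j\le p_{n}}\sum_{k=1}^{n}\eta_{ijk}$ equals the left-hand side of (\ref{lq}) multiplied by $\sqrt{n\log p_{n}}/\sqrt{\mathrm{Var}(\vert X_{1,1}-X_{2,1}\vert^{q})}$. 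The first step is to verify that the exponential moment hypothesis lifts from $X_{1,1}$ to $\eta_{12k}$: using $\vert a-b\vert^{q\alpha}\le 2^{q\alpha}(\vert a\vert^{q\alpha}+\vert b\vert^{q\alpha})$ and the elementary bound $\vert\eta_{12k}\vert^{\alpha}\lesssim \vert X_{1,k}-X_{2,k}\vert^{q\alpha}+\text{const}$, hypothesis~(i) yields $Ee^{t\vert\eta_{12k}\vert^{\alpha}}<\infty$ for some $t>0$, which is what is needed to invoke Lemma~\ref{L2}.

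For the upper bound $\limsup_{n}\max_{i<j}\sum_k\eta_{ijk}/\sqrt{n\log p_{n}}\le 2$~a.s., I would take $x_{n}=\sqrt{(4+\varepsilon)\log p_{n}}$; hypothesis~(ii) guarantees $x_{n}=o(n^{\alpha/(2(2-\alpha))})$, so Lemma~\ref{L2}(2) applies. A union bound over the $\binom{p_{n}}{2}$ pairs then gives a tail estimate of order $p_{n}^{-\varepsilon/2}=o\bigl(\exp(-\tfrac{\varepsilon}{2}n^{\alpha/(2-\alpha)})\bigr)$, which is summable; Borel--Cantelli yields the desired $\limsup$. This mirrors (\ref{24})--(\ref{a.s.sup}) line by line.

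For the matching lower bound I would apply the Chen--Stein Lemma~\ref{L1} to $Z_{\alpha}=\tfrac{1}{\sqrt{n}}\sum_{k=1}^{n}\eta_{ijk}$ with the same index set $I$ and neighbourhoods $B_{\alpha}$ as in the proof of Theorem~\ref{thm2.4}. The main term $\lambda_{2}$ and the $u_{1}$ term are handled exactly as before via Lemma~\ref{L2}(2). For $u_{2}$ I would study the sum $\eta_{12k}+\eta_{13k}$, whose variance is $\tilde{\sigma}^{2}=2+2\,\mathrm{Corr}(\vert X_{1,1}-X_{2,1}\vert^{q},\vert X_{1,1}-X_{3,1}\vert^{q})$, and apply Lemma~\ref{L2}(2) to $(\eta_{12k}+\eta_{13k})/\tilde{\sigma}$, obtaining a bound of order $p_{n}^{3-2(4-\varepsilon)/\tilde{\sigma}^{2}}$. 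Hypothesis~(iii) (read with exponent $q$, consistent with Theorem~\ref{thm4.1} and what the proof actually requires) forces the exponent to be negative for $\varepsilon$ small, so the whole bound is $o\bigl(\exp(-\varepsilon^{\prime\prime}n^{\alpha/(2-\alpha)})\bigr)$ and summable. Borel--Cantelli then yields $\liminf\ge\sqrt{4-\varepsilon}$~a.s., and letting $\varepsilon\downarrow 0$ completes the proof.

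The only non-routine point is checking that the exponential moment condition~(i) for $X_{1,1}$ really transfers to $\eta_{12k}+\eta_{13k}$ with the same parameter $\alpha$; once this is in hand, every estimate from Section~3.3 carries over with $\vert\cdot\vert^{2}$ replaced by $\vert\cdot\vert^{q}$ with no structural change. I would also flag that the statement of hypothesis~(iii) with exponent $2$ is almost certainly a typographical slip for exponent $q$, since the correlation that naturally controls $u_{2}$ in the argument is the one between $\vert X_{1,1}-X_{2,1}\vert^{q}$ and $\vert X_{1,1}-X_{3,1}\vert^{q}$, matching the corresponding hypothesis in Theorem~\ref{thm4.1}.
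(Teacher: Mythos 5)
Your proposal is correct and follows essentially the same route as the paper, which proves Theorem~\ref{thm4.2} simply by substituting the $l^{q}$-analogue of $\eta_{ijk}$ into the argument for Theorem~\ref{thm2.4}; your verification that hypothesis~(i) transfers the exponential moment to $\eta_{12k}$ mirrors the corresponding computation in Section~3.3. Your observation that hypothesis~(iii) should read with exponent $q$ rather than $2$ is also well taken, as that is the correlation the $u_{2}$ estimate actually requires.
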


\subsection{Simulation resluts}
In this section, we carry out a Monte Carlo simulation on the law of logarithm for the maximum interpoint distance $M_{n}$ discussed in the previous section, to illustrate the validity of our results. We assume that $\{ X_{i,k}; 1\le i \le p, 1\le k \le n \}$ are i.i.d.~$N(0,1)$-distributed random variables. For each Monte Carlo iteration, we calculate the value of 
\begin{equation}
z=\frac{  M_{n}^{2}-2 n}{2\sqrt{2 n \log_{}{p}}}. \nonumber
\end{equation}

In our simulation, we perform $K=300$ Monte Carlo iterations and work on four different combinations of $(p,n)$ with $(p,n)$=(150,~100), $(p,n)$=(200,~200), $(p,n)$=(500,~250), and $(p,n)$=(600,~400). Figures 1 and 2 present scatter diagrams that compare the values of $z$ and 2. The observed results indicate that the value of $z$ are uniformly distributed around 2, with no discernible bias between the scatter plots and 2, a trend accentuated by the substantial values of both $p$ and $n$. This outcome serves as a compelling illustration of the validity of the results derived from the law of logarithm for the maximum interpoint distance $M_{n}$. The systematic exploration of various $(p,n)$ combinations enriches the understanding of the behavior of $z$ in the context of this simulation, adding depth and credibility to the findings.

\begin{figure}[htbp]
\centering
	\subfigure{
	\includegraphics[width=7cm,height=6cm]{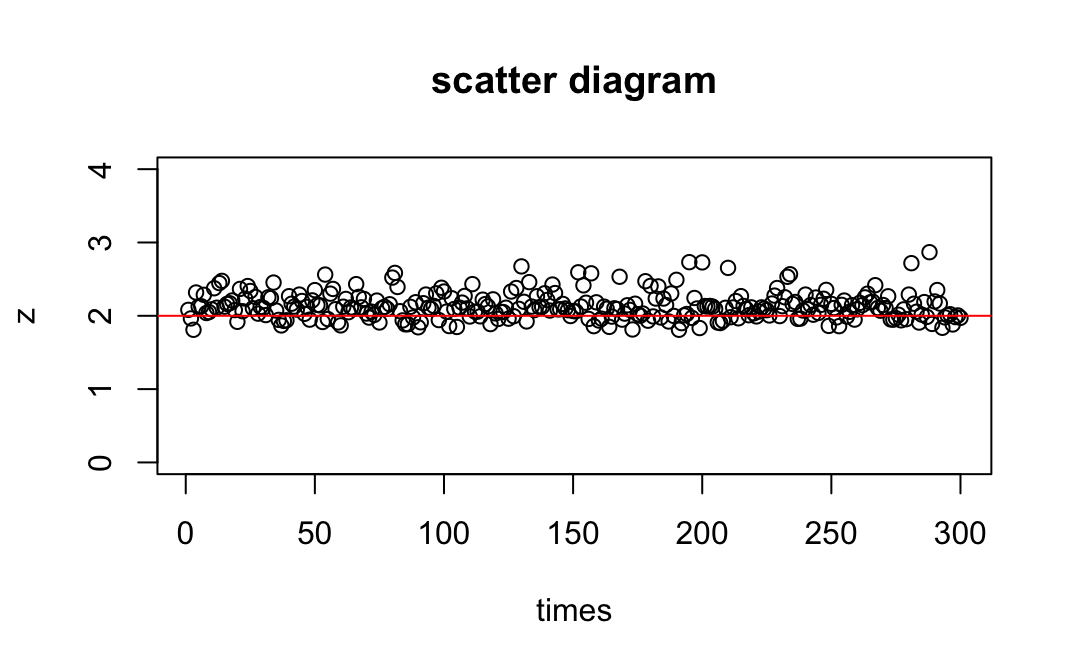} \label{Fig.6(b)}
}
	\subfigure{
	\includegraphics[width=7cm,height=6cm]{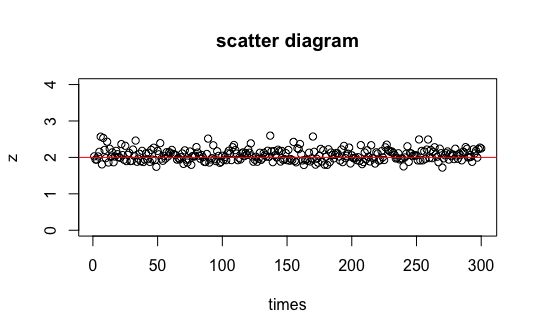} \label{Fig.6(b)}
}			
	\caption{Scatter diagrams corresponding to $(n,p)=(150,100)$ for the left picture and $(n,p)=(200,200)$ for the right. The straight line is $z=2$.}
\end{figure}
\begin{figure}[htbp]
\centering
	\subfigure{
	\includegraphics[width=7cm,height=6cm]{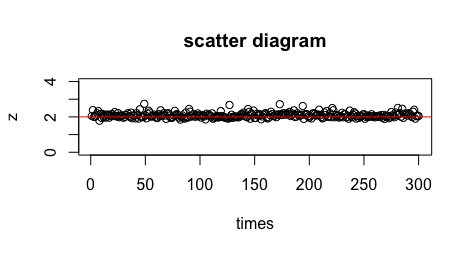} \label{Fig.6(b)}
}
	\subfigure{
	\includegraphics[width=7cm,height=6cm]{figure_3.png} \label{Fig.6(b)}
}			
	\caption{Scatter diagrams corresponding to $(n,p)=(500,250)$ for the left picture and $(n,p)=(600,400)$ for the right. The straight line is $z=2$.}
\end{figure}

\section*{Acknowledgements}
The authors thank anonymous reviewers for giving valuable comments and suggestions in improving the manuscript.
This paper was supported by National Natural Science Foundation of China (Grant No. 11771178, 12171198); the Science and Technology Development Program
of Jilin Province (Grant No. 20210101467JC); Technology Program of Jilin Educational Department during the "14th Five-Year" Plan Period (Grant No. JJKH20241239KJ) and Fundamental Research Funds
for the Central Universities.


\begin{thebibliography}{99}
\bibitem{AR06}
Appel, M. J., Russo, R. P.: Limiting distributions for the maximum of a symmetric function on a random point set. \emph{J. Theor. Probab.}, \textbf{19}(2), 365--375 (2006)

\bibitem{ANR02}
Appel, M. J. B., Najim, C. A., Russo, R. P.: Limit laws for the diameter of a random point set. \emph{Adv. Appl. Probab.}, \textbf{34}(1), 1--10 (2002)

\bibitem{AGG89}
Arratia, R., Goldstein, L., Gordon, L.: Two moments suffice for Poisson approximations: the Chen-Stein method. \emph{Ann. Probab.}, \textbf{17}, 9--25 (1989)


\bibitem{DFS15}
Demichel, Y., Fermin, A. K., Soulier, P.: The diameter of an elliptical cloud. \emph{Electron. J. Probab.}, \textbf{20}(27), 1--32 (2015)

\bibitem{D23}
Ding, X.: Strong limit theorem for largest entry of large-dimensional random tensor. \emph{Random Matrices Theory Appl.}, doi: 10.1142  (2023)


\bibitem{HK23}
Heiny, J., Kleemann, C.: Maximum interpoint distance of high-dimensional random vectors. arXiv: 2302.06965 (2023)

\bibitem{HK96}
Henze, N., Klein, T.: The limit distribution of the largest interpoint distance from a symmetric kotz sample. \emph{J. Multivariate Anal.}, \textbf{57}(2), 228--239 (1996)

\bibitem{HL10}
Henze, N., Lao, W.: The limit distribution of the largest interpoint distance for power-tailed spherically decomposable distributions and their affine images. \emph{Preprint, Karlsruhe Institute of Technology}, (2010)


\bibitem{JJ15}
Jammalamadaka, S. R., Janson, S.: Asymptotic distribution of the maximum interpoint distance in a sample of random vectors with a spherically symmetric distribution. \emph{Ann. Appl. Probab.}, \textbf{25}(6), 3571--3591 (2015)

\bibitem{J04}
Jiang, T.: The asymptotic distributions of the largest entries of sample correlation matrices. \emph{Ann. Appl. Probab.}, \textbf{14}(2), 865--880 (2004)
		
\bibitem{L10}
Lao, W.: Some weak limit laws for the diameter of random point sets in bounded regions, KIT Scientific Publishing, Karlsruhe, 2010

\bibitem{LR06}
Li, D., Rosalsky A.:  Some strong limit theorems for the largest entries of sample
correlation matrices. \emph{Ann. Appl. Probab.}, \textbf{16}, 423--447 (2006)

\bibitem{L20}
Li, J.: Asymptotic distribution-free change-point detection based on interpoint distances for high-dimensional data. \emph{J. Nonparametr. Stat.}, \textbf{32}(1), 157--184 (2020)

\bibitem{L61}
Linnik, J. V.:  On the probability of large deviations for sums of independent variables. In \emph{Proc. 4th Berkeley Sympos. Math. Statist. and Prob., Vol. II}, Univ. California Press, Berkeley, Calif., pp. 289--306 (1961)

\bibitem{MR93}
Matthews, P. C., Rukhin A. L.:  Asymptotic distribution of the normal sample range. \emph{Ann. Appl. Probab.}, \textbf{3}, 454--466 (1993)

\bibitem{MM07}
Mayer, M., Molchanov, I.:  Limit theorems for the diameter of a random sample in the unit ball. \emph{Extremes}, \textbf{10}(3), 129--150 (2007)

\bibitem{M18}
Modarres, R.:  Multinomial interpoint distances. \emph{Statist. Papers}, \textbf{59}(1), 341--360 (2018)

\bibitem{MS20}
Modarres, R., Song, Y.:  Interpoint distances: applications, properties, and visualization. \emph{Appl. Stoch. Models Bus. Ind.}, \textbf{36}(6), 1147--1168 (2020)

\bibitem{S15}
Schrempp, M.: The limit distribution of the largest interpoint distance for distributions supported by a $d$-dimensional ellipsoid and generalizations. \emph{Adv. Appl. Probab.}, \textbf{48}(4), 1256--1270 (2015)

\bibitem{S19}
Schrempp, M.: Limit laws for the diameter of a set of random points from a distribution supported by a smoothly bounded set. \emph{Extremes}, \textbf{22}(1), 167--191 (2019)

\bibitem{SM19}
Song, Y., Modarres, R.: Interpoint distance test of homogeneity for multivariate mixture models. \emph{Int. Stat. Rev.}, \textbf{87}(3), 613--638 (2019)

\bibitem{TLX22}
Tang, P., Lu, R., Xie, J.: Asymptotic distribution of the maximum interpoint distance for high-dimensional data. \emph{Statist. Probab. Lett.}, \textbf{190}, 109567 (2022)


\bibitem{Z87}
Za\u{i}tsev, A. Y.: On the Gaussian approximation of convolutions under multidimensional analogues of S. N. Bernstein's inequality conditions. \emph{Probab. Theory Related Fields}, \textbf{74}(4), 535--566 (1987)

\end{thebibliography}
\end{document}